\newtheorem{thm}{Theorem}[section]
\newtheorem{prop}[thm]{Proposition}
\newtheorem*{cor*}{Corollary}
\newtheorem{conj}{Conjecture}
\newtheorem*{question*}{Question}
\theoremstyle{definition}
\newtheorem{defi}[thm]{Definition}
\title{The Penults of Tak: Adventures in impartial, normal-play, positional games}
\author[B. Alexeev]{Boris Alexeev}
\address{}
\email{boris.alexeev@gmail.com}
\author[P. Ellis]{Paul Ellis}
\address{Department of Mathematics, Rutgers University,
110 Frelinghuysen Road,
Piscataway, NJ,
08854,
USA}
\email{paulellis@paulellis.org}
\author[M. Richter]{Michael Richter}
\address{Baruch College, Zicklin School of Business, Department of Economics, 55 Lexington Avenue, New York, NY 10010, USA}
\address{Royal Holloway, University of London, Department of Economics, 1 Egham Hill, Egham, TW20 1EX, United Kingdom}
\email{michael.dan.richter@gmail.com}
\author[T. A. Thanatipanonda]{Thotsaporn Aek Thanatipanonda}
\address{Science Division, Mahidol University International College,
999 Phutthamonthon Sai 4 Rd, Salaya, Phutthamonthon District,
Nakhon Pathom,
73170,Thailand}
\email{thotsaporn@gmail.com}
\begin{document}

\maketitle

\begin{abstract}
    For normal play, impartial games, we define \emph{penults} as those positions in which every option results in an immediate win for the other player.  We explore the number of tokens in penults of two positional games, \textsc{Impartial Tic} and \textsc{Impartial Tak}.  We obtain a complete classification in the former case,.  We then explore winning strategies and further directions.
\end{abstract}

\section{Penults}

You are teaching \textsc{chess} to a child.  You explain that the goal is to capture the king.  After several plays, you then explain that, really, we don't capture the king, but the game ends two moves prior in a special position called \textit{checkmate}.

You are presenting an introduction to combinatorial games in a math circle.  As usual, you start with $\textsc{subtract}\{1,2,3\}$, but you call it `Take-Away $1,2,3$.'  The participants are working in pairs, and you walk around the room, asking what they have discovered so far. Several pairs excitedly report that they have solved the game:  ``You just need to get it down to $4$!''

Two children are playing classical $\textsc{tic-tac-toe}$.  The first player has created a fork.  The second player puts down their pencil and says, ``You win.''

In each of these scenarios, it is intuitive to the players that the game has already ended just before the second-to-last move.  In these positions,
\[\text{`wherever I go, you win.'}\]  
We name these positions by leaving out the last two syllables of `penultimate', just as the players leave out the last two moves.

\begin{defi}\label{definition of penult}
    Let $\mathcal{G}$ be an impartial game position.  
    \begin{itemize}
        \item If $\mathcal{G}$ has no options, it is \emph{terminal}.  (`Game over!')
        \item If $\mathcal{G}$ is not terminal, but it has a terminal option, it is an \emph{ult}.  (`I win!')
        \item If $\mathcal{G}$ is not terminal, and all of its options are ults, it is a \emph{penult}.  (`Wherever I go, you win!')
    \end{itemize}
\end{defi}

In many treatments of $\textsc{dots and boxes}$ (see, for example, Chapter 1.7 of \cite{LIP}), the analysis begins once the board is `filled up' and any further moves would create a three-sided box.  These filled up positions are thus of some interest.  If we modify the game so that it ends once any box has been completed, then these `filled up' positions become the penults of the abbreviated game $\textsc{d\&b}$.


In the board game \cite{Tak}, players take turns placing and moving stones on a rectangular board, with the goal of creating a road from one edge of the board to the opposite edge.  Roads need not be straight lines, but only orthogonal connections count in the win condition.

In early 2023, Daniel Hodgins introduced the game \textsc{Impartial Tak} to Gord's Problem Incubator, an online discussion group.  In this game, players take turns placing tokens on the empty spaces of a square board.  The first player to create an orthogonal path from any edge to its opposite edge wins.  The only penult on a $2\times 2$ board is an empty one, and, up to symmetries of the square, there are only $2$ penults on a $3\times 3$ board, shown in Figure \ref{fig:3x3 example}.  
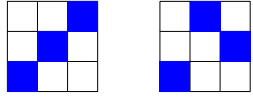
\begin{figure}[h]
    \centering
    \begin{tikzpicture}[scale=0.4] 
    \draw[step=1cm,black,very thin] (0,0) grid (3,3);
    \fill[blue] (0,0) rectangle (1,1);
    \fill[blue] (1,1) rectangle (2,2);
    \fill[blue] (2,2) rectangle (3,3);
    \end{tikzpicture}
    \qquad
    \begin{tikzpicture}[scale=0.4] 
    \draw[step=1cm,black,very thin] (0,0) grid (3,3);
    \fill[blue] (0,0) rectangle (1,1);
    \fill[blue] (1,2) rectangle (2,3);
    \fill[blue] (2,1) rectangle (3,2);
    \end{tikzpicture}
    \caption{All $3\times 3$ penults for \textsc{impartial tak}}
    \label{fig:3x3 example}
\end{figure}
Notice that the parities of these penults show that second player wins $2\times 2$ and first player wins $3\times 3$.
During the course of that discussion, several games were played on a $4\times4$ board. 
To some surprise, it was discovered that penults might have $6$, $7$, or $8$ tokens, as in Figure \ref{fig:4x4 example}, and thus the winning strategy is much less clear.
\begin{figure}[h]
    \centering
    \begin{tikzpicture}[scale=0.4] 
    \draw[step=1cm,black,very thin] (0,0) grid (4,4);
    \fill[blue] (0,2) rectangle (1,4);
    \fill[blue] (2,2) rectangle (3,4);
    \fill[blue] (1,0) rectangle (2,1);
    \fill[blue] (3,0) rectangle (4,1);
    \end{tikzpicture}
    \qquad
    \begin{tikzpicture}[scale=0.4] 
    \draw[step=1cm,black,very thin] (0,0) grid (4,4);
    \fill[blue] (0,2) rectangle (2,4);
    \fill[blue] (2,0) rectangle (4,1);
    \fill[blue] (3,1) rectangle (4,2);
    \end{tikzpicture}
    \qquad
    \begin{tikzpicture}[scale=0.4] 
    \draw[step=1cm,black,very thin] (0,0) grid (4,4);
    \fill[blue] (1,1) rectangle (3,3);
    \fill[blue] (0,0) rectangle (1,1);
    \fill[blue] (3,3) rectangle (4,4);
    \fill[blue] (0,3) rectangle (1,4);
    \fill[blue] (3,0) rectangle (4,1);
    \end{tikzpicture}
    \caption{Some $4\times 4$ penults for \textsc{impartial tak} with $6$, $7$, and $8$ tokens}
    \label{fig:4x4 example}
\end{figure}
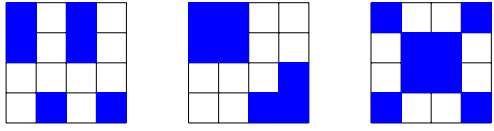
Hence the focus of this paper:

\begin{question*}\label{question: number of tokens}
    How many tokens are in the penults of \textsc{impartial tak}?
\end{question*}

Notice that both \textsc{impartial tak} and \textsc{d\&b} are positional games.  While we defined penults more generally, these are the games for which they are the most interesting.
In Section \ref{section - impartial tic}, we first turn our attention to another positional game, \textsc{impartial tic}.  In this game, players again take turns filling in an $n\times n$ grid with identical tokens, but the play ends when any row or column is completed.
In Section \ref{section - impartial tak} we explore the above question.  In Section \ref{section - winning strategies} we explore winning strategies for both games.

\section{The penults of \textsc{Impartial Tic}}\label{section - impartial tic}

When exploring the penults of \textsc{impartial tic}, we will find it helpful to instead draw penults for the dual game.  In this game, we begin with a board full of tokens, players take turn removing tokens from the board, and the game ends when any row or column is empty.  A penult then becomes a position satisfying
\begin{itemize}
\item  Each row and column has at least two tokens, and
\item  No token is in both a row and column with at least $3$ tokens.
\end{itemize}
Figure \ref{fig:canonical penults for tic} shows examples of the penults
\begin{align*}
\mathcal{A}_n &\text{ which has }2n\text{ tokens, defined for }n\geq 3\\
\mathcal{B}_n &\text{ which has }3(n-1)\text{ tokens, defined for }n\geq 3\\
\mathcal{C}_n &\text{ which has }4(n-2)\text{ tokens, defined for }n\geq 4
\end{align*}
\begin{figure}[h]
    \centering
    \begin{tikzpicture}[scale=0.4] 
    \draw[step=1cm,black,very thin] (0,0) grid (5,5);
    \fill[blue] (0,3) rectangle (1,5);
    \fill[blue] (1,2) rectangle (2,4);
    \fill[blue] (2,1) rectangle (3,3);
    \fill[blue] (3,0) rectangle (4,2);
    \fill[blue] (4,0) rectangle (5,1);
    \fill[blue] (4,4) rectangle (5,5);
    \end{tikzpicture}
    \qquad
    \begin{tikzpicture}[scale=0.4] 
    \draw[step=1cm,black,very thin] (0,0) grid (5,5);
    \fill[blue] (0,0) rectangle (1,4);
    \fill[blue] (1,4) rectangle (5,5);
    \fill[blue] (1,3) rectangle (2,4);
    \fill[blue] (2,2) rectangle (3,3);
    \fill[blue] (3,1) rectangle (4,2);
    \fill[blue] (4,0) rectangle (5,1);
    \end{tikzpicture}
    \qquad
    \begin{tikzpicture}[scale=0.4] 
    \draw[step=1cm,black,very thin] (0,0) grid (5,5);
    \fill[blue] (0,0) rectangle (2,3);
    \fill[blue] (2,3) rectangle (5,5);
    \end{tikzpicture}
    \caption{The penults $\mathcal{A}_5$, $\mathcal{B}_5$, and $\mathcal{C}_5$.}
\label{fig:canonical penults for tic}
\end{figure}
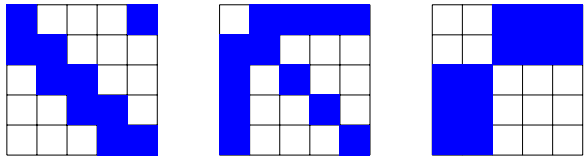
Since any penult must have at least $2$ tokens in each row, the lower bound of $2n$ is sharp.  In fact, the upper bound in these examples is sharp as well, and we can construct penults for each intermediate value.

\begin{prop}
If $n\geq 5$, the number of tokens in a penult is at most $4(n-2)$.  If $n=4$, then there are at most $9$ tokens
\end{prop}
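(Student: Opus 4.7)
The plan is to do a case analysis on the numbers $h_R$ and $h_C$ of heavy rows and heavy columns, where \emph{heavy} means ``at least $3$ tokens'' and \emph{light} means ``exactly $2$ tokens''. Write $\ell_R = n - h_R$ and $\ell_C = n - h_C$, and let $T$ denote the total number of tokens in the penult.

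The central identity comes from a double count. By the penult condition, no token lies in a heavy row and a heavy column simultaneously, so every token sits in a light row or a light column (or both). Summing tokens over light rows gives $2\ell_R$, and over light columns gives $2\ell_C$; letting $x$ count tokens lying in both a light row and a light column, inclusion--exclusion yields
\[
T \;=\; 2\ell_R + 2\ell_C - x \;\leq\; 4n - 2(h_R + h_C).
\]
This already disposes of every configuration with $h_R + h_C \geq 4$, since then $T \leq 4n - 8 = 4(n-2)$.

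For $h_R + h_C \leq 3$, I handle the remaining sub-cases by hand. If $h_R = 0$ or $h_C = 0$, every row (resp.\ column) is light, forcing $T = 2n$, which is $\leq 4(n-2)$ already for $n \geq 4$. The only cases left are $(h_R,h_C) \in \{(1,1),(1,2),(2,1)\}$. For these I upgrade the bound by the cell-count observation: tokens in heavy rows live in the $h_R \ell_C$ cells where heavy rows meet light columns (at most one per cell), so $T \leq 2\ell_R + h_R \ell_C$, and symmetrically $T \leq 2\ell_C + h_C \ell_R$. A quick substitution gives $T \leq 3(n-1)$ in the $(1,1)$ case and $T \leq 3n - 4$ in the other two.

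Finally, I compare each bound against the target. All are $\leq 4(n-2)$ when $n \geq 5$, while at $n = 4$ the $(1,1)$ bound gives $9$, matching the exceptional value in the proposition; the other cases all give at most $8$. The main obstacle is really just bookkeeping in the small cases; once the double-counting identity $T = 2\ell_R + 2\ell_C - x$ is in hand, the case work is routine, and the only subtlety is recognizing that it is the $(1,1)$ case that degrades the bound from $4(n-2)=8$ to $9$ when $n=4$.
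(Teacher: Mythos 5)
Your proof is correct. The counting is sound at every step: in a penult of the dual game every non-heavy row or column has exactly two tokens, no token lies in a heavy row and a heavy column, so the identity $T = 2\ell_R + 2\ell_C - x$ and the refinement $T \le 2\ell_R + h_R\ell_C$ (tokens of heavy rows must sit in light columns) both hold, and the case bounds $4n-2(h_R+h_C)$, $2n$, $3n-3$, $3n-4$ check out, giving $4(n-2)$ for $n\ge 5$ and $9$ for $n=4$. This is a genuinely different route from the paper's. The paper argues by contradiction: assuming at least $4n-7$ tokens (or at least $10$ when $n=4$), it shows that more than half the tokens must lie in rows with at least $3$ tokens and, symmetrically, more than half in such columns, so by pigeonhole some token lies in both, violating the penult condition; the $n=4$ case needs a separate ad hoc count. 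Your direct case analysis on $(h_R,h_C)$ buys a uniform treatment of $n=4$ and $n\ge 5$, avoids the majority/pigeonhole step, and yields extra structural information: it isolates $h_R=h_C=1$ as the only pattern that can exceed $4(n-2)$ at $n=4$, and the bound $3(n-1)$ in that case is exactly attained by $\mathcal{B}_n$, just as $4(n-2)$ with $h_R=h_C=2$ is attained by $\mathcal{C}_n$. The paper's argument is shorter and essentially case-free for $n\ge 5$, but yours is arguably more transparent about where the exceptional value $9$ comes from. One tiny presentational point: state explicitly that every non-heavy row or column is light because a penult of the dual game has at least two tokens in each line; you use this silently when writing $\ell_R = n-h_R$.
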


\begin{proof}
Suppose we have a penult on an $n\times n$ board. Assume for a contradiction 
that either $n=4$ and there are at least $10$ tokens, or $n\geq 5$ and there are at least $4n-7$ tokens.  Call a token an $R$-token if it is in a row with at least $3$ tokens.  We claim that over half of the tokens must be $R$-tokens.  Since the position is a penult, any row containing a non-$R$-token must have exactly $2$ tokens in it.

In the case $n=4$, if at least $5$ are not $R$-tokens, then these must be in at least $3$ of the rows.  So we must have $3$ rows with exactly $2$ tokens each, and $1$ row with $4$ tokens.  In this case, one of the columns must have at least $3$ tokens, so the token in this column that is also in the filled row shows that this position is not a penult.

In the case $n\geq 5$, if at least $2n-3$ are not $R$-tokens, then these must be in at least $n-1$ of the rows. This means there are $2n-2$ tokens in these rows, leaving only one row for the remaining $2n-4  (>n)$ tokens, a contradiction. 

Similarly, at least half of the tokens must be in a column with at least $3$ tokens.  So by the pigeonhole principle, there is a token that is in both a row and a column with at least $3$ tokens, contradicting that the position is a penult.
\end{proof}
Thus for $n=4$, $\mathcal{A}_4$ and $\mathcal{B}_4$ show the full range of values.
\begin{prop}
If $n\geq 5$, $2n\leq x\leq 4(n-2)$, we can construct a penult on an $n\times n$ board with $x$ tokens.
\end{prop}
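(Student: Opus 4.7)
My plan is to prove the statement by explicit construction, organized by the number of ``big'' rows and columns in the penult. Call a row (or column) \emph{big} if it has at least $3$ tokens, and \emph{small} otherwise; in a penult, small rows and columns have exactly $2$ tokens, and no token may lie at the intersection of a big row and a big column. The base case $x = 2n$ is realized by $\mathcal{A}_n$.

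For $x = 2n + 1$, I would use a penult with exactly one big row and one big column, each of size $3$: for instance, row $n-1$ with tokens at columns $n-3, n-2, n-1$, and column $0$ with tokens at rows $0, 1, 2$, filling in the small sub-grid so that each small row and column attains exactly $2$ tokens. For $2n + 2 \leq x \leq 4(n-2)$, I would use a penult with exactly two big rows (rows $n-2, n-1$) and two big columns (columns $0, 1$). A double-count, using that each of the $n-2$ small columns has exactly $2$ tokens, shows the total token count equals $k + 2(n-2)$, where $k$ is the common total number of tokens in the big rows and in the big columns. For any $k \in \{6, \dots, 2(n-2)\}$, I can split $k$ as $k_1 + k_2$ with each $k_i \in \{3, \dots, n-2\}$, distribute these tokens into the big rows (within small columns), do analogously for the big columns (within small rows), and place the remaining $2(n-2) - k$ tokens into the $(n-2) \times (n-2)$ small sub-grid so that each small row and column ends up with exactly $2$ tokens.

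The main obstacle is realizing the prescribed bipartite degree sequence on the small sub-grid. Since every prescribed degree is $0$, $1$, or $2$ and the row and column sums agree, realizability follows from elementary considerations (partial matchings combined with short cycles), provided the initial placement in the big rows and big columns is chosen to avoid degenerate sub-grid degree sequences (such as a single row and single column both requiring degree $2$). The flexibility in distributing $k$ tokens among $n - 2$ columns makes appropriate choices always available.
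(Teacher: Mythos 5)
Your construction is correct, but it takes a genuinely different route from the paper. The paper proceeds by exhibiting explicit families and gluing them block-diagonally: $\mathcal{A}_k$ with $\mathcal{B}_{n-k}$ covers $[2n,3n-6]$, $\mathcal{B}_k$ with $\mathcal{C}_{n-k}$ covers $[3n-7,4n-14]$, five sporadic families $\mathcal{D}_{n,m}$ fill in $4n-13$ through $4n-9$, and the cases $n=5,6$ are finished by hand with specific examples. You instead give a single parametric construction: fix two big rows and two big columns, observe (by double counting the small rows and small columns, each of which must have exactly $2$ tokens) that the token counts in the big rows and in the big columns coincide at some value $k$ with $6\le k\le 2(n-2)$ and that the total is $k+2(n-2)$, then complete the small $(n-2)\times(n-2)$ sub-grid to the prescribed residual degrees; the cases $x=2n$ and $x=2n+1$ are handled by $\mathcal{A}_n$ and a one-big-row/one-big-column variant. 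Your approach buys uniformity -- no sporadic families and no separate small-$n$ analysis -- at the cost of a realization lemma that the paper's explicit pictures avoid. That lemma does need to be stated and proved, not just gestured at: for bipartite degree sequences with all demands in $\{0,1,2\}$ and equal row/column totals, Gale--Ryser (or a short greedy argument) shows the only obstruction is a total demand of $2$ concentrated in a single row and a single column; total demand $\ge 3$ forces at least two columns and two rows of positive demand, which already suffices. The concentrated case occurs only when $k=2(n-2)-2$, i.e.\ $x=4n-10$, which arises only for $n\ge 6$, and there you should say explicitly how the big-row and big-column tokens are placed so that exactly two small columns (and two small rows) receive one token each -- e.g.\ two overlapping intervals of lengths $n-3$ and $n-3$ -- after which the two remaining sub-grid tokens go in distinct rows and columns. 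With that paragraph added, your proof is complete and, in my view, cleaner than the paper's.
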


\begin{proof}
For $n\geq 6$, $3\leq k\leq n-3$, the otherwise empty $n\times n$ board with $\mathcal{A}_k$ in the upper left corner and $\mathcal{B}_{n-k}$ in the lower right corner is a penult.  This penult has $2k+3(n-k-1)=3n-k-3$ tokens.  These penults thus have all possible number of tokens in the interval $[2n,3n-6]$.

For $n\geq 7$, $3\leq k\leq n-4$, the otherwise empty $n\times n$ board with $\mathcal{B}_k$ in the upper left corner and $\mathcal{C}_{n-k}$ in the lower right corner is a penult.  This penult has $3(k-1)+4(n-k-2)=4n-k-11$ tokens.  These penults thus have all possible number of tokens in the interval $[3n-7,4n-14]$.

For $9\leq m\leq 13$, Figure \ref{fig:spradic penults for tic} shows examples of the penults $\mathcal{D}_{n,m}$, which have $4n-m$ tokens respectively.  To show this note
\begin{align*}
    \mathcal{D}_{n,9}&\text{ has }4(n-3)+3=4n-9\text{ tokens}\\
    \mathcal{D}_{n,10}&\text{ has }2(n-4)+2(n-3)+4=4n-10\text{ tokens}\\
    \mathcal{D}_{n,11}&\text{ has }4(n-4)+5=4n-11\text{ tokens}\\
    \mathcal{D}_{n,12}&\text{ has }4(n-4)+4=4n-12\text{ tokens}\\
    \mathcal{D}_{n,13}&\text{ has }4(n-5)+7=4n-13\text{ tokens}
\end{align*}
Note that $\mathcal{D}_{n,9}$ is defined for $n\geq 5$, while $\mathcal{D}_{n,10}$, $\mathcal{D}_{n,11}$, and $\mathcal{D}_{n,12}$ are defined for $n\geq 6$, and $\mathcal{D}_{n,13}$ is defined for $n\geq 7$. 
Thus for $n\geq 7$, $\mathcal{A}_n$, $\mathcal{C}_n$ and the above constructions are sufficient.  For $n=6$, consider $\mathcal{A}_6$, $\mathcal{D}_{6,11}$, $\mathcal{D}_{6,10}$, $\mathcal{D}_{6,9}$, and $\mathcal{C}_6$.  For $n=5$, consider $\mathcal{A}_5$, $\mathcal{D}_{5,9}$, and $\mathcal{C}_5$.
\end{proof}
\def\sporadicscale{0.35}
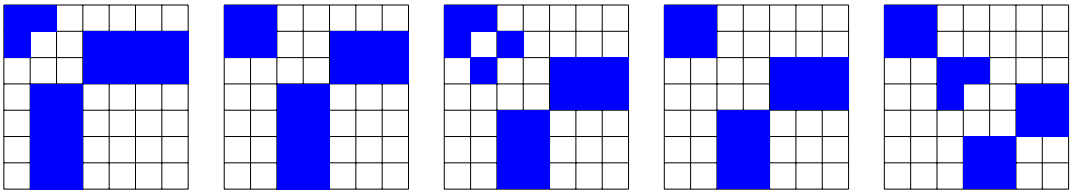
\begin{figure}[h]
    \centering
    \begin{tikzpicture}[scale=\sporadicscale] 
    \draw[step=1cm,black,very thin] (0,0) grid (7,7);
    \fill[blue] (1,0) rectangle (3,4);
    \fill[blue] (3,4) rectangle (7,6);
    \fill[blue] (0,5) rectangle (1,7);
    \fill[blue] (1,6) rectangle (2,7);
    \end{tikzpicture}
    \quad
    \begin{tikzpicture}[scale=\sporadicscale] 
    \draw[step=1cm,black,very thin] (0,0) grid (7,7);
    \fill[blue] (2,0) rectangle (4,4);
    \fill[blue] (4,4) rectangle (7,6);
    \fill[blue] (0,5) rectangle (2,7);
    \end{tikzpicture}
    \quad
    \begin{tikzpicture}[scale=\sporadicscale] 
    \draw[step=1cm,black,very thin] (0,0) grid (7,7);
    \fill[blue] (2,0) rectangle (4,3);
    \fill[blue] (4,3) rectangle (7,5);
    \fill[blue] (0,5) rectangle (1,7);
    \fill[blue] (1,6) rectangle (2,7);
    \fill[blue] (1,4) rectangle (2,5);
    \fill[blue] (2,5) rectangle (3,6);
    \end{tikzpicture}
    \quad
    \begin{tikzpicture}[scale=\sporadicscale] 
    \draw[step=1cm,black,very thin] (0,0) grid (7,7);
    \fill[blue] (2,0) rectangle (4,3);
    \fill[blue] (4,3) rectangle (7,5);
    \fill[blue] (0,5) rectangle (2,7);
    \end{tikzpicture}
    \quad
    \begin{tikzpicture}[scale=\sporadicscale] 
    \draw[step=1cm,black,very thin] (0,0) grid (7,7);
    \fill[blue] (3,0) rectangle (5,2);
    \fill[blue] (5,2) rectangle (7,4);
    \fill[blue] (0,5) rectangle (2,7);    
    \fill[blue] (2,3) rectangle (3,5);
    \fill[blue] (3,4) rectangle (4,5);
    \end{tikzpicture}
    \caption{The penults $\mathcal{D}_{7,9}$, $\mathcal{D}_{7,10}$, $\mathcal{D}_{7,11}$, $\mathcal{D}_{7,12}$, $\mathcal{D}_{7,13}$}
\label{fig:spradic penults for tic}
\end{figure}

\begin{cor*}
    The possible number of tokens in a penult on an $n\times n$ board of the dual game of \textsc{impartial tic} is
    \begin{center}
    \begin{tabular}{c|c}
        $n$ & set of possible number of tokens in a penult \\\hline
        $2$ & $\{4\}$ \\
        $3$ & $\{6\}$ \\
        $4$ & $\{8,9\}$ \\
        $\geq 5$ & $[2n,4(n-2)]$ 
    \end{tabular}    
    \end{center}    
\end{cor*}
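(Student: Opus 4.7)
The plan is to piece together the two preceding propositions with a direct verification of the small cases $n \in \{2,3,4\}$. For $n \geq 5$, the upper bound $4(n-2)$ is the first proposition, the constructions realizing every intermediate value form the content of the second proposition, and the lower bound $2n$ is immediate from the penult definition, since each of the $n$ rows must contain at least two tokens (and $\mathcal{A}_n$ attains this). So the row ``$\geq 5$'' of the table falls out with no further work.

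For the small cases I would argue as follows. For $n = 2$, every row must contain at least two tokens, so the full $2 \times 2$ board is forced and has $4$ tokens. For $n = 4$, the first proposition gives the upper bound $9$ and the row-at-least-two bound gives the lower bound $8$; both are realized, since $\mathcal{A}_4$ has $8$ tokens and $\mathcal{B}_4$ has $9$.

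The only case requiring a brief combinatorial argument is $n = 3$. The lower bound of $6$ is immediate. For the upper bound, if every row has exactly two tokens the total is already $6$; otherwise some row has all three of its cells filled, in which case each of the three columns contains a token from that row, and the penult condition (no token sits in both a row and column with at least $3$ tokens) forces every column to have at most two tokens, yielding total $\leq 2 \cdot 3 = 6$. Either way the total is exactly $6$, attained by $\mathcal{A}_3$. There is essentially no obstacle here beyond bookkeeping — the corollary is a short consolidation of the two propositions with a one-line pigeonhole for $n = 3$; the only subtlety is invoking the lower bound $2n$ directly from the definition of a penult rather than from either proposition, and checking that the canonical constructions $\mathcal{A}_n$ and $\mathcal{B}_n$ are valid in each small regime where they are used.
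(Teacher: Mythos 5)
Your proposal is correct and follows essentially the same route the paper intends: the corollary is just a consolidation of the two propositions (upper bound and constructions for $n\geq 5$), the remark that $\mathcal{A}_4$ and $\mathcal{B}_4$ cover $n=4$, and the observation that the lower bound $2n$ comes directly from each row needing two tokens. Your brief column-counting argument for $n=3$ and the forced full board for $n=2$ fill in the small cases the paper leaves implicit, so nothing is missing.
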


\section{The Penults of \textsc{Impartial Tak}}\label{section - impartial tak}

We now turn our attention to the penults of \textsc{impartial tak}.  For $n\geq 1$, let $L(n)$ and $U(n)$ denote the size of the smallest and largest penult on an $n\times n$ board.  Clearly, each row of a penult must be missing at least two tokens.  Hence $U(n)\leq n^2-2n$.  On the other hand, this bound is sharp:
    

\begin{figure}[h]
    \centering
    \begin{tikzpicture}[scale=0.4] 
    \draw[step=1cm,black,very thin] (0,0) grid (7,7);
    \fill[blue] (0,6) rectangle (1,7);
    \fill[blue] (4,6) rectangle (7,7);
    \fill[blue] (0,0) rectangle (1,2);
    \fill[blue] (1,0) rectangle (2,1);
    \fill[blue] (6,0) rectangle (7,1);
    \fill[blue] (5,5) rectangle (6,6);
    \fill[blue] (6,4) rectangle (7,6);
    \fill[blue] (1,2) rectangle (2,6);
    \fill[blue] (2,1) rectangle (3,6);
    \fill[blue] (3,1) rectangle (4,6);
    \fill[blue] (4,1) rectangle (5,5);
    \fill[blue] (5,1) rectangle (6,4);
    \draw[red, thick] (1.5,6.5) -- (3.5,6.5);
    \draw[red, thick] (3.5,6.5) -- (6.5,3.5);
    \draw[red, thick] (0.5,5.5) -- (0.5,2.5);
    \draw[red, thick] (0.5,2.5) -- (2.5,0.5);
    \draw[red, thick] (3.5,0.5) -- (5.5,0.5);
    \draw[red, thick] (6.5,1.5) -- (6.5,2.5);
    \end{tikzpicture}
    \qquad
    \begin{tikzpicture}[scale=0.4] 
    \draw[step=1cm,black,very thin] (0,0) grid (5,5);
    \fill[blue] (0,0) rectangle (3,3);
    \fill[blue] (3,3) rectangle (4,4);
    \fill[blue] (4,4) rectangle (5,5);
    \end{tikzpicture}
    \qquad
    \begin{tikzpicture}[scale=0.4] 
    \draw[step=1cm,black,very thin] (0,0) grid (5,5);
    \fill[blue] (0,0) rectangle (3,3);
    \fill[blue] (4,3) rectangle (5,4);
    \fill[blue] (3,4) rectangle (5,5);
    \end{tikzpicture}
    \caption{(a) The Variable Diamond, enumeration emphasized. (b), (c) The L-Snakes}
    \label{fig:intermediate penults}
\end{figure}
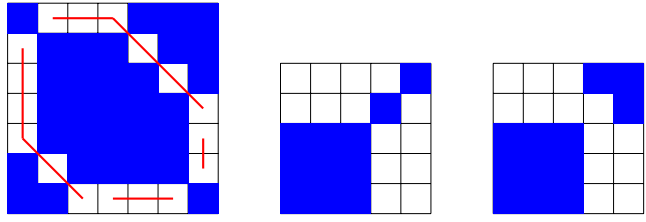

\begin{prop}
    Let $n\geq 4$ and $n^2-4(n-2)-2 \leq x \leq n^2-2n$.  Then there is a penult on an $n\times n$ board with $x$ tokens.  
\end{prop}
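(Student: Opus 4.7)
The plan is to exhibit explicit families of penults, illustrated by Figure \ref{fig:intermediate penults}, that together realize every integer in $[n^2-4(n-2)-2,\ n^2-2n]$.

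First, I would generalize the L-Snakes of Figure \ref{fig:intermediate penults}(b) and (c) for all $n\geq 4$: an $(n-2)\times(n-2)$ filled block in one corner together with two or three isolated tokens placed on the main diagonal heading toward the opposite corner. These two configurations produce penults with the two smallest sizes in the interval, namely $(n-2)^2+2=n^2-4(n-2)-2$ and $(n-2)^2+3=n^2-4(n-2)-1$.

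Second, I would generalize the Variable Diamond of Figure \ref{fig:intermediate penults}(a) to a family of penults: the empty cells form a single closed loop encircling a central block of tokens that is separated from every edge, while small isolated token clusters sit in the corners of the board. By varying the horizontal and vertical extents of the loop independently, the number of tokens changes in unit steps, and the family realizes every remaining value from $n^2-4(n-2)$ up through $n^2-2n$. At the upper extreme $n^2-2n$, the loop degenerates to a tight frame with exactly two empty cells in each row and each column.

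Finally, I would verify that each configuration is a penult. Non-terminality is immediate: the central token component is walled off from every edge by the empty loop, while each isolated corner cluster touches only two adjacent (not opposite) edges. The ult condition---that after any placement the opponent can complete an edge-to-edge road---is the main obstacle. The key observation is that every empty cell lies either on the closed loop (Diamond) or near the short diagonal tail (L-Snakes), and for each such cell one can exhibit a canonical ``opposite-side'' response whose placement joins the enlarged token component to the opposite edge, completing a road. I plan to handle the Diamond uniformly by partitioning the empty loop into four arcs (top, right, bottom, left) and checking the pairing arc-by-arc; the L-Snake case reduces to a short finite analysis of the $O(n)$ empty cells along the diagonal tail.
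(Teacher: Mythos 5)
Your proposal takes essentially the same route as the paper: the two-parameter Variable Diamond family (with $k$ and $l$ empty cells in the top row and left column, $2\le k,l\le n-2$) realizes every value in $[n^2-4(n-2),\,n^2-2n]$, and the two L-Snakes supply the remaining values $n^2-4(n-2)-2$ and $n^2-4(n-2)-1$; the paper, like you, leaves the penult verification to inspection of the configurations. One small correction: the second L-Snake cannot be ``three isolated tokens on the main diagonal'' (only two diagonal cells remain outside an $(n-2)\times(n-2)$ corner block); as in Figure \ref{fig:intermediate penults}(c), it is the opposite corner cell together with its two orthogonal neighbors.
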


\begin{cor*}
For all $n\geq 2$, $U(n)= n^2-2n$.
\end{cor*}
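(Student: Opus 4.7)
The plan is to construct an explicit penult with $x$ tokens on the $n\times n$ board for every $x$ in the range $[n^2-4n+6,n^2-2n]$, using the Variable Diamond and the two L-Snake configurations of Figure \ref{fig:intermediate penults} as building blocks. The corollary $U(n)=n^2-2n$ then follows immediately from the $x=n^2-2n$ case, together with the already-noted fact that $U(n)\leq n^2-2n$ because no row of a penult can have fewer than two empty squares.

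The first step is to verify that the largest Variable Diamond is a penult with $n^2-2n$ tokens. Here every row and column has exactly two empty squares, so no edge-to-edge path exists; for each empty square $e$, filling $e$ leaves its row (or column) one square short, and the opponent wins by filling the partner empty square to complete that line. This handles the top of the range and pins down $U(n)$.

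The second step is to show that the Variable Diamond admits a one-parameter deformation obtained by systematically peeling away a single token at a time along its winding path, yielding penults with token counts $n^2-2n,\,n^2-2n-1,\,\dots,\,n^2-4n+8$. The red-lined enumeration in the figure suggests a natural ordering in which the tokens can be converted to empty squares, and I would argue inductively that each single removal preserves the penult property: the newly emptied square still has a one-move winning response for the opponent, and the responses available at the previously empty squares are unaffected. The remaining two values $x=n^2-4n+7$ and $x=n^2-4n+6$ are realized by the two L-Snake configurations. I would verify these by case analysis: empty squares aligned with a nearly-complete row or column admit the natural line-completing response, while empty squares near the elbow of the L admit a response that bridges the large corner block to the short tail, producing an edge-to-edge path (as in the $n=5$ case of Figure \ref{fig:intermediate penults}(b), where filling $(4,3)$ is answered by the opponent's $(3,2)$, realizing a path through the $3\times 3$ block).

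The main obstacle will be Step 2: giving a precise description of the parameterized Variable Diamond and confirming that every intermediate configuration really is a penult. The danger at each shrinking step is two-fold — either the newly exposed geometry creates an accidental edge-to-edge path among the remaining tokens (so the position becomes terminal or an ult rather than a penult), or some empty square loses its winning response because the path that used to run through the removed token is now broken. Ruling out both of these failure modes will require a careful local argument near the boundary of the diamond at each step along the parameterization.
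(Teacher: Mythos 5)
Your top-level logic for this corollary matches the paper's: the bound $U(n)\le n^2-2n$ comes from each row of a penult missing at least two squares, and sharpness comes from the maximal Variable Diamond (the case $k=l=2$), which has exactly $n^2-2n$ tokens; your Step 2 (peeling) and the L-Snakes are only relevant to the surrounding proposition, not to this corollary. The genuine gap is in the one thing that actually has to be argued, namely that the maximal Variable Diamond is a penult. Your stated reason for ``no edge-to-edge path exists,'' namely that every row and column has exactly two empty squares, is not a valid inference: on a $4\times4$ board, writing cells as (column, row) with row $1$ at the bottom, the tokens $(1,4),(1,3),(2,3),(2,2),(3,2),(3,1),(4,4),(4,1)$ leave exactly two empty squares in every row and every column, yet $(1,4),(1,3),(2,3),(2,2),(3,2),(3,1)$ is a road from the top edge to the bottom edge. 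Moreover, being a penult requires not only that every option be answerable by an immediate win, but that no option be terminal, i.e.\ that no single placement already completes a road; you never check this, and it too does not follow from the two-empties-per-line count. For instance, two $2\times2$ blocks of tokens, one on columns $1,2$ and rows $3,4$ and the other on columns $3,4$ and rows $1,2$, have exactly two empty squares in every line, contain no road, and every move leaves some line one square short -- yet placing a single token at $(3,3)$ joins the blocks into a top-to-bottom road, so that position is an ult, not a penult.

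Both missing points genuinely require the geometry of the diamond, not just line counts: the empty squares form a closed ring around the board, so the tokens decompose into a central component touching no edge and four corner components each touching only two adjacent (never opposite) edges, and any single placement on the ring merges the central component with at most one corner component, so the resulting connected component still meets at most two adjacent edges and no road appears. Once that is in place, your line-completion reply correctly shows every option is an ult, giving a penult with $n^2-2n$ tokens and hence $U(n)=n^2-2n$. Two smaller remarks: the corollary is asserted for all $n\ge2$ while the diamond construction needs $n\ge4$, so you should close $n=2,3$ separately (the empty $2\times2$ board and the three-token penults of Figure \ref{fig:3x3 example}); and your proposed peeling of single tokens from a fixed diamond differs from the paper's route to the intermediate values, which instead varies the lengths $k$ and $l$ of the two gaps in the Variable Diamond of Figure \ref{fig:intermediate penults} -- removing one token from a penult has no reason to preserve the penult property, so if you pursue Step 2 you should expect to redraw the ring at each step rather than literally delete tokens.
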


\begin{proof}
    Consider the Variable Diamond penult in Figure \ref{fig:intermediate penults}.  Let the number of spaces without tokens in the top row and left column  be $k$ and $l$, respectively.  Then the number of tokens is
    \[n^2-2(n-1)-(k-1)-(l-1)=n^2-2n-k-l+4\]
    where $2\leq k,l\leq n-2$.  Thus the possible number of tokens is the interval $[n^2-4(n-2),n^2-2n]$.  
    Next note that the number of tokens in the L-Snake penults in \ref{fig:intermediate penults} are $(n-2)^2+2=n^2-4(n-2)-2$ and $(n-2)^2+3=n^2-4(n-2)-1$.
\end{proof}

We now turn our attention to the lower bound.

\begin{prop}\label{prop:snake}
    Suppose $n\geq 6$.  There is a penult on an $n\times n$ board with 
    \[\begin{cases}
    2n+(n+2)(\frac{n-4}{3})&=\frac{n^2}{3}+\frac{4}{3}n-\frac{8}{3}\text{ if }n= 1\bmod 6\\
    2(n-2)+n(\frac{n-2}{3})&=\frac{n^2}{3}+\frac{4}{3}n-4 \text{ if }n= 2\bmod 6\\
    (n-2)+(n-1)+(n+1)(\frac{n-3}{3})&=\frac{n^2}{3}+\frac{4}{3}n-4\text{ if }n= 3\bmod 6\\
    2n+(n+2)(\frac{n-4}{3})&=\frac{n^2}{3}+\frac{4}{3}n-\frac{8}{3}\text{ if }n= 4\bmod 6\\
    2n+(n-2)+(n+2)(\frac{n-5}{3})&=\frac{n^2}{3}+2n-\frac{16}{3} \text{ if }n= 5\bmod 6\\
    3(n-2)+n(\frac{n-3}{3})&=\frac{n^2}{3}+2n-6\text{ if }n= 0\bmod 6
    \end{cases}\] tokens.  Hence $L(n)\leq \frac{n^2}{3}+2n-\frac{8}{3}$, for all $n\geq 6$.
\end{prop}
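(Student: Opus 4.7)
The plan is to exhibit, for each residue of $n$ modulo $6$, an explicit ``snake'' penult realizing the stated token count, and then to observe that $L(n)\leq n^2/3+2n-8/3$ follows by taking the maximum over the six piecewise expressions. The six cases correspond to how a common snake body fits into the board after accounting for top and bottom caps, so the proof is naturally a single construction with six variants rather than six essentially different arguments.

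The construction has two pieces. The \emph{body} is a stack of horizontal bands of height three: each band contains either $n$ or $n+2$ tokens arranged so that the band alone nearly spans the board from side to side, with a single gap in one column. Consecutive bands are linked by a short vertical \emph{bridge} of tokens at alternating extremes (leftmost column, then rightmost column, alternating), producing one connected serpentine region in which every row has at least one gap. The \emph{caps} are small stubs of tokens attached to the topmost and bottommost bands; their role is twofold, preventing any vertical path from actually reaching the opposite horizontal edge, while still guaranteeing that every empty cell near a cap is a keystone. The residue of $n$ modulo $6$ dictates how many bands fit into the interior of the board and the precise shape of the caps, which is why the token count splits into six subcases.

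The verification has two parts. First, one checks that the configuration is not already a winning position: every row and every column contains a gap in the snake, no band extends fully across, and neither cap reaches its opposite edge, so there is no orthogonal edge-to-edge path through placed tokens. Second --- and this is the main obstacle --- one shows that \emph{every} empty cell, when occupied, completes some top-to-bottom or left-to-right path. The argument partitions the empty cells into a small number of types (the gap of a band, an inter-band row away from any bridge, a cell near a cap, and a few degenerate variants at the corners), and for each type exhibits the winning path explicitly: typically the newly filled cell joins the snake at a nearby bridge, rides around the serpent, and exits through a cap to the opposite edge. The delicate step is engineering the caps and the alignment of inter-band gaps so that no empty cell is ``idle'' in any of the six residue classes simultaneously.

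Once each construction is verified, the token count is a direct sum of $n$ or $n+2$ tokens per band over the appropriate number of bands, plus the small cap contribution; this reproduces the six closed-form expressions in the statement. A routine comparison of each expression against $n^2/3+2n-8/3$ --- the largest coefficient of $n$ appears in the case $n\equiv 0\pmod{6}$, and the constant term analysis is immediate --- then yields the stated upper bound on $L(n)$.
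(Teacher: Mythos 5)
There is a genuine gap, and it lies in the verification criterion itself. You propose to show that ``every empty cell, when occupied, completes some top-to-bottom or left-to-right path.'' That is the wrong property: it would make every option \emph{terminal}, whereas a penult requires every option to be an \emph{ult}. For \textsc{impartial tak} this means two things must be checked: (i) no single placement completes a road (otherwise that option is terminal, not an ult), and (ii) after any single placement, the opponent can complete a road with one further placement. Your criterion is not only different from (i)--(ii); it is incompatible with being a penult. The same confusion shows up in the geometry you describe: a horizontal band that ``nearly spans the board from side to side, with a single gap in one column'' is fatal, because filling that single gap completes an edge-to-edge row in one move. In the paper's construction the long strips are deliberately kept short of the far edge by two cells and are joined to their neighbours only by \emph{diagonal} chains of tokens, precisely so that no single placement finishes a road while every placement creates an immediate threat.

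Beyond the criterion, the proposal never actually produces the construction: there are no explicit coordinates, no case check of the empty cells, and the token counts are asserted rather than computed, so none of the six formulas is established. The paper's own proof is terse but different in kind: it exhibits explicit diagrams (Figure \ref{fig:snake diagram}, one for each residue class modulo $6$, drawn for $n=13,\dots,18$) consisting of width-one near-full strips spaced three apart with diagonal connectors, from which the six counts (strips of $n$ or $n+2$ tokens, as many as fit for the given residue) are read off directly. To repair your argument you would need to (a) replace your verification target by conditions (i) and (ii) above, (b) fix the band/gap geometry so that (i) holds, and (c) actually specify the configuration for each residue class and carry out the empty-cell case analysis and the count. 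The final step, deducing $L(n)\leq \frac{n^2}{3}+2n-\frac{8}{3}$ by comparing the six expressions, is fine as you state it.
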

\def\offset{0.2}
\def\snakescale{0.285}
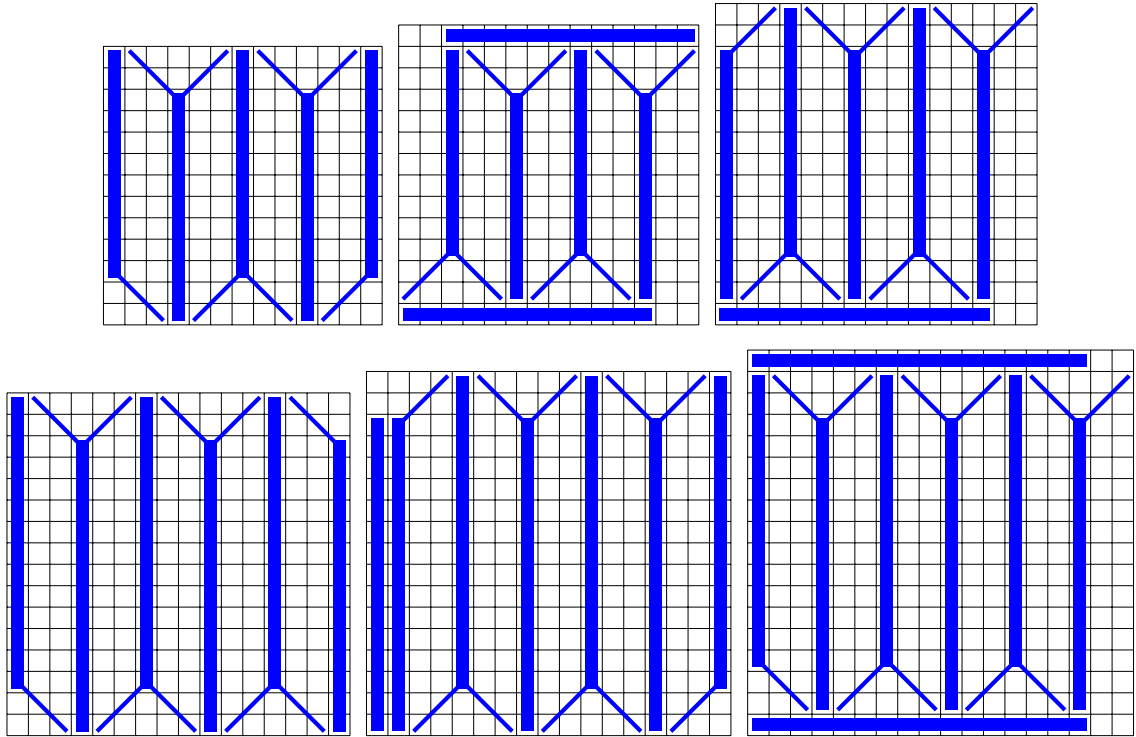
\begin{figure}[h]
    \centering

    \begin{tikzpicture}[scale=\snakescale] 
    \draw[step=1cm,black,very thin] (0,0) grid (13,13);
    \foreach \x in {0,6,12}
    {\fill[blue] (\x+\offset,2+\offset) rectangle (1+\x-\offset,13-\offset);}

    \foreach \x in {0,6}
    {\draw[blue, ultra thick] (0.5+\x,2.5) -- (0.5+\x+2.3,2.5-2.3);}

    \foreach \x in {6,12}
    {\draw[blue, ultra thick] (0.5+\x,2.5) -- (0.5+\x-2.3,2.5-2.3);}

    \foreach \x in {3,9}
    {\fill[blue] (\x+\offset,0+\offset) rectangle (1+\x-\offset,11-\offset);
    \draw[blue, ultra thick] (0.5+\x, 10.5) -- (0.5+\x+2.3, 10.5+2.3);
    \draw[blue, ultra thick] (0.5+\x, 10.5) -- (0.5+\x-2.3, 10.5+2.3);
    }

    \end{tikzpicture}\;
    \begin{tikzpicture}[scale=\snakescale] 
    \draw[step=1cm,black,very thin] (0,0) grid (14,14);
    \fill[blue] (0+\offset,0+\offset) rectangle (12-\offset,1-\offset);
    \fill[blue] (2+\offset,13+\offset) rectangle (14-\offset,14-\offset);

    \foreach \x in {5,11}
    {\fill[blue] (\x+\offset,1+\offset) rectangle (1+\x-\offset,11-\offset);
    \draw[blue, ultra thick] (0.5+\x, 10.5) -- (0.5+\x+2.3, 10.5+2.3);
    \draw[blue, ultra thick] (0.5+\x, 10.5) -- (0.5+\x-2.3, 10.5+2.3);}

    \foreach \x in {2,8}
    {\fill[blue] (\x+\offset,3+\offset) rectangle (1+\x-\offset,13-\offset);
    \draw[blue, ultra thick] (0.5+\x, 3.5) -- (0.5+\x+2.3, 3.5-2.3);
    \draw[blue, ultra thick] (0.5+\x, 3.5) -- (0.5+\x-2.3, 3.5-2.3);
    }

    \end{tikzpicture}\;
    \begin{tikzpicture}[scale=\snakescale] 
    \draw[step=1cm,black,very thin] (0,0) grid (15,15);
    \fill[blue] (0+\offset,0+\offset) rectangle (13-\offset,1-\offset);
    \foreach \x in {0,6,12}
    {\fill[blue] (\x+\offset,1+\offset) rectangle (1+\x-\offset,13-\offset);
    \draw[blue, ultra thick] (0.5+\x,12.5) -- (0.5+\x+2.3,12.5+2.3);}

    \foreach \x in {6,12}
    {
    \draw[blue, ultra thick] (0.5+\x,12.5) -- (0.5+\x-2.3,12.5+2.3);}

    \foreach \x in {3,9}
    {\fill[blue] (\x+\offset,3+\offset) rectangle (1+\x-\offset,15-\offset);
    \draw[blue, ultra thick] (0.5+\x,3.5) -- (0.5+\x+2.3,3.5-2.3);
    \draw[blue, ultra thick] (0.5+\x,3.5) -- (0.5+\x-2.3,3.5-2.3);}

    \end{tikzpicture}\bigskip
    
    \begin{tikzpicture}[scale=\snakescale] 
    \draw[step=1cm,black,very thin] (0,0) grid (16,16);
    \foreach \x in {3,9,15}
    {\fill[blue] (\x+\offset,0+\offset) rectangle (1+\x-\offset,14-\offset);
    \draw[blue, ultra thick] (0.5+\x,13.5) -- (0.5+\x-2.3,13.5+2.3);}

    \foreach \x in {3,9}
    {\draw[blue, ultra thick] (0.5+\x,13.5) -- (0.5+\x+2.3,13.5+2.3);}

    \foreach \x in {0,6,12}
    {\fill[blue] (\x+\offset,2+\offset) rectangle (1+\x-\offset,16-\offset);
    \draw[blue, ultra thick] (0.5+\x,2.5) -- (0.5+\x+2.3,2.5-2.3);}

    \foreach \x in {6,12}
    {\draw[blue, ultra thick] (0.5+\x,2.5) -- (0.5+\x-2.3,2.5-2.3);}

    \end{tikzpicture}\;
    \begin{tikzpicture}[scale=\snakescale] 
    \draw[step=1cm,black,very thin] (0,0) grid (17,17);
    \foreach \x in {0,1,7,13}
    {\fill[blue] (0+\x+\offset,0+\offset) rectangle (1+\x-\offset,15-\offset);}

    \foreach \x in {1,7,13}
    {\draw[blue, ultra thick] (\x+0.5, 14.5) -- (\x+0.5+2.3, 14.5+2.3);}

    \foreach \x in {7,13}
    {\draw[blue, ultra thick] (\x+0.5, 14.5) -- (\x+0.5-2.3, 14.5+2.3);}

    \foreach \x in {4,10,16}
    {\fill[blue] (0+\x+\offset,2+\offset) rectangle (1+\x-\offset,17-\offset);}

    \foreach \x in {4,10,16}
    {\draw[blue, ultra thick] (0.5+\x,2.5) -- (0.5+\x-2.3,2.5-2.3);}

    \foreach \x in {4,10}
    {\draw[blue, ultra thick] (0.5+\x,2.5) -- (0.5+\x+2.3,2.5-2.3);}
  
    \end{tikzpicture}\;
    \begin{tikzpicture}[scale=\snakescale] 
    \draw[step=1cm,black,very thin] (0,0) grid (18,18);
    \fill[blue] (0+\offset,0+\offset) rectangle (16-\offset,1-\offset);
    \fill[blue] (0+\offset,17+\offset) rectangle (16-\offset,18-\offset);
    \foreach \x in {0,6,12} 
        {\fill[blue] (3+\x+\offset,1+\offset) rectangle (4+\x-\offset,15-\offset);
        \draw[blue, ultra thick] (3.5+\x,14.5) -- (3.5+\x-2.3,14.5+2.3);
        \draw[blue, ultra thick] (3.5+\x,14.5) -- (3.5+\x+2.3,14.5+2.3);}
        
    \foreach \x in {-3,3,9} 
        {\fill[blue] (3+\x+\offset,3+\offset) rectangle (4+\x-\offset,17-\offset);
        \draw[blue, ultra thick] (3.5+\x,3.5) -- (3.5+\x+2.3,3.5-2.3);}

    \foreach \x in {3,9} 
        {\draw[blue, ultra thick] (3.5+\x,3.5) -- (3.5+\x-2.3,3.5-2.3);}
    \end{tikzpicture}
    \caption{The Snake Diagrams for $n=13,14,15,16,17,18$, drawn to emphasize the enumeration of the tokens.}
    \label{fig:snake diagram}
\end{figure}
\begin{proof}
    See the Snake Diagrams in Figure \ref{fig:snake diagram}.
\end{proof}
We are pretty sure the L-Snakes can be iteratively transformed into the Snake Diagrams via a sequence of penults, though we don't see a non-tedious way to verify this:

\begin{conj}
        For all $n\geq 6$, let $M_n$ be the value given by Proposition \ref{prop:snake}, and let $M_n\leq x\leq n^2-4(n-2)-2$.  Then there is a penult on an $n\times n$ board with $x$ tokens.
\end{conj}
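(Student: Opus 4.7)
My plan is to construct an explicit family of penults whose token counts sweep out the full interval $[M_n, n^2-4(n-2)-2]$, interpolating between the Snake Diagrams of Proposition \ref{prop:snake} (near the lower end) and the L-Snakes (near the upper end). Note that this cannot literally be carried out by adding one token at a time to a fixed penult, since adding any token to a penult immediately produces a road (every option is an ult by definition). So the ``sequence of penults'' has to consist of distinct configurations that differ by small local rearrangements rather than by token addition inside a single position.

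The first approach I would try is to build a multi-parameter family of snake-shaped penults. Take the Snake Diagram and allow each of its vertical ``ribs'' of tokens to be thickened from width $1$ to width $2$, either fully or partially (adjoining a block of height $h \le \ell$ to a rib of length $\ell$). A partial thickening of height $h$ adds exactly $h$ tokens, provided the resulting configuration remains a penult. With enough ribs available for thickening, these counts should cover the full range up to the L-Snake regime. As a backup, I would consider ``hybrid'' configurations that glue a snake region in one portion of the board to a solid block in the remainder, mirroring the Variable Diamond idea from the previous proof and giving a second, overlapping parameter grid of token counts.

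The main obstacle, in either approach, is verifying the penult property after each local modification. The ``no current road'' condition is the easier of the two: a partial thickening does not connect any pair of opposite edges, and this can be checked by a direct connectivity argument on the complement. The harder condition is that \emph{every} empty cell, when filled, must create a road. In the Snake Diagram this holds because each empty cell sits in some ``almost-road''---either a nearly completed horizontal or vertical line, or a nearly completed path built from two adjacent ribs. When we thicken or otherwise alter a rib, the set of almost-roads changes in intricate ways, and it is not immediate that every empty cell retains a witness almost-road.

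A reasonable way to overcome this is a combination of induction on $n$ together with case analysis on $n \bmod 6$. One would fix a bounded ``transition region'' (say, three adjacent ribs of the snake) whose local penult structure is verified once and for all by inspection, and then argue that replacing the transition region in a known penult by a thickened version preserves the global penult property: empty cells outside the transition region keep their original almost-roads, while empty cells inside acquire new ones from the newly placed tokens. Combined with a computer check for the base cases (say $n \le 20$), a careful but admittedly case-heavy verification along these lines should be enough to settle the conjecture; the tediousness noted by the authors would then be localized to the finite check of a handful of transition patterns.
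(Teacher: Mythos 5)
This statement is one of the paper's open conjectures, not a theorem: the authors explicitly say they believe the L-Snakes can be iteratively transformed into the Snake Diagrams through a sequence of penults but ``don't see a non-tedious way to verify this,'' and they supply no proof. Your proposal is essentially a restatement of that same intended route (interpolate between the Snake Diagrams and the L-Snakes, with hybrid snake-plus-block positions as a fallback), but it is a plan rather than a proof: no explicit family of configurations is defined, the penult property of the thickened positions is never verified, and the coverage claim is asserted (``should cover the full range,'' ``should be enough to settle the conjecture'') rather than proved. The verification you defer is exactly the content of the conjecture. There is also a quantitative gap you do not address: thickening each rib from width $1$ to width $2$ adds only on the order of $n^2/3$ tokens, so even fully thickened snakes have roughly $\tfrac{2}{3}n^2$ tokens, far below the required upper endpoint $n^2-4(n-2)-2\approx n^2-4n$; so the primary construction cannot reach the L-Snake regime, and everything rests on the unspecified ``hybrid'' positions and an unspecified finite check.

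Separately, you misstate the defining condition twice, and this matters because it is precisely the condition whose preservation must be checked. Adding a token to a penult does \emph{not} ``immediately produce a road'': by definition every option of a penult is an ult, i.e.\ a position with no road in which some further placement completes a road. Likewise the condition to verify is not that ``every empty cell, when filled, creates a road'' (that would make the options terminal rather than ults, so the position would not be a penult at all); it is that no single placement completes a road, while every placement leaves a threat --- some almost-road missing exactly one cell. Your ``almost-road'' language suggests you have the right picture informally, but a case analysis organized around the stated (incorrect) condition would go astray, and in particular thickening a rib can in principle violate the other half of the condition by creating a one-move road completion, which your sketch never rules out. As it stands, the proposal neither matches a proof in the paper (there is none) nor closes the conjecture.
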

We also think that the snake diagrams are essentially minimal:
\begin{conj}
        For all $n$, $L(n)\geq \frac{n^2}{3}+an+b$, for some constants $a$ and $b$.
\end{conj}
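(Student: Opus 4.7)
The plan is to establish $L(n) \geq n^2/3 - O(n)$ via a row-density argument, which matches the Snake Diagram upper bound up to lower-order terms. The guiding observation is that in the extremal Snake constructions each row contains roughly $n/3$ tokens, so any tight lower bound must recover this density somehow, row by row or in windows of rows.

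Concretely, I would aim to prove the following three-row window lemma: there is an absolute constant $C$ such that for every penult $P$ on an $n\times n$ board and every choice of three consecutive rows $r-1,r,r+1$, the total number of tokens within those three rows is at least $n-C$. Summing over the $n-2$ valid windows and dividing by the maximum multiplicity $3$ (each row lies in at most three windows) then yields $L(n) \geq n(n-2)/3 - O(n)$, which has the conjectured form $n^2/3 + an + b$.

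The penult condition provides the lever: for every empty cell $c = (r,j)$ in the middle row of a window there is a partner $c'$ such that $P+c+c'$ contains an edge-to-edge path $\pi$ using both. I would classify $(r,j)$ by the local behavior of $\pi$. Either $\pi$ passes horizontally across $(r,j)$, in which case each horizontal neighbor $(r,j\pm 1)$ is either a token or equal to $c'$; or $\pi$ exits row $r$ vertically at $(r,j)$, in which case at least one of $(r-1,j)$ or $(r+1,j)$ is a token of $P$. In every case, the penult condition forces tokens in the three-row window in the immediate vicinity of $(r,j)$. A discharging scheme then pushes weight from each empty cell onto nearby tokens, with each token absorbing only bounded total charge, producing the claimed $n-C$ bound.

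The main obstacle, I expect, lies in the non-locality of the partner relation. A single path $\pi$ may traverse rows far from $r$, and the tokens of $\pi$ inside the window $\{r-1,r,r+1\}$ may be shared by many empty cells' paths, which frustrates naive charging. I would hope to circumvent this either by (i) first proving a structural reduction showing that a near-extremal penult decomposes into approximately-vertical column strips that can be analyzed one at a time, or (ii) phrasing the penult condition as an integer program and extracting the $n^2/3$ bound from an LP relaxation whose dual selects one token per three-row window. Both routes require substantial new structural results about minimal penults beyond those in the paper, which is presumably why the authors leave this as a conjecture.
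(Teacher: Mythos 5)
This statement is a conjecture in the paper, not a theorem: the authors do not prove it, and their only progress toward it is the weaker bound that a penult has at least $\frac{7}{26}(n-4)^2$ tokens, obtained by a bounded-window discharging argument (every $5\times 5$ ``Weighted Cross'' must carry weight at least $7$, summed over all placements, with each cell counted with total weight $26$). Your proposal, if carried out, would be strictly stronger than anything in the paper, so it must be judged on its own terms --- and on those terms it has a genuine gap: the entire argument rests on the three-row window lemma (every three consecutive rows of a penult contain at least $n-C$ tokens), and you do not prove it. The averaging step that follows is routine; the lemma is the theorem. Your use of the penult condition is sound as far as it goes (since a penult is neither terminal nor an ult, any spanning path in $P+c+c'$ must indeed use both $c$ and $c'$), but the local case analysis you sketch only forces a bounded number of tokens near each empty cell of the middle row, and, as you yourself observe, the partner cell $c'$ and the path $\pi$ can live far outside the window and be shared among many empty cells. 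Nothing in the proposal controls this sharing, and the two escape routes you offer --- a structural decomposition of near-minimal penults into column strips, or an LP-duality extraction --- are named but not executed. So the proposal is a research plan with the hard step left open, not a proof.

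It is worth noting why the paper's argument stops at $7/26$ rather than $1/3$: a window of bounded size admits a finite case analysis of how few tokens it can contain in a penult, which is exactly what makes the Weighted Cross bound provable, but any such bounded-window density constant is forced below $1/3$ by the Snake Diagrams themselves. Your full-width three-row windows are the natural way to aim for the sharp constant $1/3$, but they give up the finite case analysis, and that trade-off is precisely where your argument (and, presumably, the authors') currently gets stuck. If you want a provable intermediate result along your lines, you could try enlarging the paper's weighted pattern (e.g., wider crosses) to push the constant above $7/26$; but matching $n^2/3+an+b$ appears to require a genuinely global structural input that the proposal does not supply.
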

We next show some partial progress toward this latter conjecture.
\begin{prop}
    If $n\geq 5$, a penult on an $n\times n$ board must have at least 
$\frac{7}{26}(n-8)^2$
tokens.
\end{prop}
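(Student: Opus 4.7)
The plan is to upper-bound $|E| = n^2 - |T|$, the number of empty cells, via a double-counting argument using the witnesses provided by the penult condition. Writing $E = [n]\times[n] \setminus T$, recall that $T$ is a penult iff (i) no edge-to-edge orthogonal path is contained in $T$, (ii) no such path is contained in $T \cup \{e\}$ for any single $e \in E$, and (iii) for every $e \in E$ there is such a path $P$ with $P \cap E = \{e, e'\}$ for some $e' \in E$; call $P$ a \emph{witness} for $e$. Every witness spans all $n$ rows or all $n$ columns, so it has length at least $n$ and contains at least $n - 2$ tokens. Applied to straight rows and columns, condition (ii) already forces every row and every column to contain at least two empty cells, giving the weak bound $|E| \geq 2n$, but obtaining the quadratic lower bound on $|T|$ demands considerably more.

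Fix for each $e \in E$ a witness path $P_e$, and consider the incidence count
\[
\sum_{e \in E} |P_e \cap T| \;=\; \sum_{t \in T} |\{e \in E : t \in P_e\}|.
\]
The left side is at least $|E|(n-2)$. If the multiplicity $|\{e : t \in P_e\}|$ can be bounded uniformly in $t$ by some $M$, then the right side is at most $M|T|$, giving $|E|(n-2) \leq M|T|$ and hence $|T| \geq n^2(n-2)/(M + n - 2)$. A short calculation shows that $|T| \geq \tfrac{7}{26}n^2$ is equivalent to $M \leq \tfrac{19(n-2)}{7}$, so the target constant emerges precisely from a multiplicity bound of roughly $2.7\,n$ witness paths per token.

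To match the stated $(n-8)^2$ correction, I would restrict the sum on the left to empty cells lying in an interior window $I$ of side $n - 8$, say $I = [5,n-4]\times[5,n-4]$. Near the boundary a witness path is forced to bend or to hug the edge, which breaks the clean multiplicity bound; four cells of buffer from every edge give enough room for a witness through an interior empty cell to behave in the generic way. Running the same incidence inequality with $E \cap I$ in place of $E$, and using $|T \cap I| \leq |T|$, then yields $|T| \geq (n-8)^2(n-2)/(M + n - 2) = \tfrac{7}{26}(n-8)^2$, as desired.

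The main obstacle, and really the technical heart of the argument, is establishing the multiplicity bound $M \leq \tfrac{19(n-2)}{7}$. This is a geometric statement: how many distinct $2$-threats through a fixed interior token $t$ can coexist under the penult condition, given that each such threat has exactly two empty cells and the pair of empty cells must be distinct across threats through $t$? I expect this will need a careful case analysis on the shape of $P_e$ (straight row or column versus bent), balanced between horizontal and vertical witnesses, with the partner-of-a-partner structure used to rule out too many concurrent witnesses through a common token; the very specific fraction $\tfrac{7}{26}$ strongly suggests that a small local optimization is being solved exactly in this step.
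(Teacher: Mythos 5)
Your reduction of the penult condition to witness paths is correct (for every empty cell $e$ there is an edge-to-edge path containing exactly two empty cells, one of which is $e$), and the arithmetic converting a multiplicity bound $M\leq \tfrac{19}{7}(n-2)$ into the stated bound is fine. But the proposal is not a proof: the multiplicity bound is the entire technical content of your argument, and you explicitly leave it unestablished. Nothing in the penult condition gives any local cap on how many of the chosen witnesses pass through a fixed token --- witnesses are global paths, they may be much longer than $n$, and in a position with a narrow ``neck'' of tokens all near-winning paths may be forced through the same few cells. Note also that in the near-minimal penults (the snake-type examples with roughly $n^2/3$ tokens and $2n^2/3$ empty cells) the \emph{average} multiplicity is already about $2(n-2)$, so your required uniform bound $\tfrac{19}{7}(n-2)\approx 2.7(n-2)$ leaves very little slack and would need precise structural control over how witnesses can be routed; no mechanism for this is sketched. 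The interior-window restriction does not help with this difficulty, since multiplicity is measured at tokens, and a boundary token can perfectly well lie on many witnesses of interior empty cells; the window only changes which empty cells you sum over.

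The paper's argument is quite different and much more local, and it is worth seeing why the constants $7$ and $26$ actually arise. One first checks by short case analysis that in a penult every plus-shaped ``Cross'' of five cells contains a token, every ``Thick Cross'' contains at least three tokens, and consequently every translate of a fixed $5\times 5$ weighted pattern (weights $2$ on an interior plus, $1$ on the other non-corner cells, $0$ on the corners, total weight $26$) has weighted token-sum at least $7$. Summing this inequality over all $(n-4)^2$ translates, each cell contributes with total coefficient at most $26$, so $26\,|T|\geq 7(n-4)^2$, which already gives $|T|\geq \tfrac{7}{26}(n-4)^2\geq \tfrac{7}{26}(n-8)^2$. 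So the ``small local optimization'' you correctly suspected behind $\tfrac{7}{26}$ lives in a bounded window and needs no control of global witness paths at all; if you want to salvage your incidence approach you would have to prove the multiplicity lemma, which looks substantially harder than the result it is meant to imply.
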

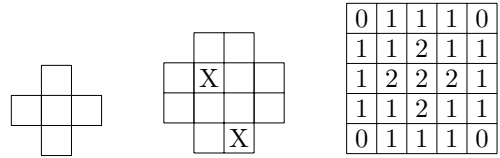
\begin{figure}[h]
    \centering
    \begin{tikzpicture}[scale=0.4] 
    \draw[step=1cm,black,very thin] (1,0) grid (2,3);
    \draw[step=1cm,black,very thin] (2,1) grid (3,2);
    \draw[step=1cm,black,very thin] (0,1) grid (1,2);
    \end{tikzpicture}
    \qquad
    \begin{tikzpicture}[scale=0.4] 
    \draw[step=1cm,black,very thin] (1,0) grid (3,4);
    \draw[step=1cm,black,very thin] (0,1) grid (1,3);
    \draw[step=1cm,black,very thin] (3,1) grid (4,3);
    \node at (2.5,0.5) {X};
    \node at (1.5,2.5) {X};
    \end{tikzpicture}
    \qquad
    \begin{tikzpicture}[scale=0.4] 
    \draw[step=1cm,black,very thin] (0,0) grid (5,5);
    \node at (1.5,2.5) {2};
    \node at (2.5,2.5) {2};
    \node at (3.5,2.5) {2};
    \node at (2.5,3.5) {2};
    \node at (2.5,1.5) {2};
    \node at (1.5,0.5) {1};
    \node at (2.5,0.5) {1};
    \node at (3.5,0.5) {1};
    \node at (0.5,1.5) {1};
    \node at (1.5,1.5) {1};
    \node at (3.5,1.5) {1};
    \node at (4.5,1.5) {1};
    \node at (0.5,2.5) {1};
    \node at (4.5,2.5) {1};
    \node at (0.5,3.5) {1};
    \node at (1.5,3.5) {1};
    \node at (3.5,3.5) {1};
    \node at (4.5,3.5) {1};
    \node at (1.5,4.5) {1};
    \node at (2.5,4.5) {1};
    \node at (3.5,4.5) {1};
    \node at (0.5,0.5) {0};
    \node at (4.5,0.5) {0};
    \node at (0.5,4.5) {0};
    \node at (4.5,4.5) {0};
    \end{tikzpicture}
    \caption{(a) The Cross, (b) the Thick Cross, and (c) the Weighted Cross}
    \label{fig:cross diagram}
\end{figure}
\begin{proof}
    Consider the diagrams in Figure \ref{fig:cross diagram}. Any penult must have at least one token in each Cross.  Indeed, if all five spaces are empty, then a token could be placed in the middle. 

    Similarly, any penult must have at least three tokens in each Thick Cross. 
    Indeed suppose a penult contains a Thick Cross with only two tokens.  If both tokens are in the boundary spaces, then there will be a sub-Cross with no tokens.  If both tokens are in the middle squares, then a third token can be placed in another middle square.  If one token is in a middle square and one in a boundary square, then to avoid an empty Cross, they are placed as in the figure.  In this case, a third token may be placed in any empty middle square.

    A similar case analysis shows that, in a penult, the sum of the weights of the tokens in any Weighted Cross is at least $7$.

    So consider any penult on an $n\times n$ board, and for each $1\leq i,j\leq n$, let $x_{i,j}=1$ if there is a token in position $(i,j)$ and $0$ if not.  Then $\sum_{1\leq i,j\leq n} x_{i,j}$ is the number of tokens in the penult.

    Meanwhile, we have for all $0\leq r,c\leq (n-4)$,
    \[x_{r+1,c+2} + x_{r+1,c+3} + x_{r+1,c+4} + x_{r+2,c+1} + x_{r+2,c+2} + 2x_{r+2,c+3} +
    \ldots + x_{r+5,c+4} \geq 7,\]
    where the coefficients correspond to those in the Weighted Cross.  Then summing over $0\leq r,c\leq (n-4)$, 
    we obtain
    \[\sum_{5\leq i,j\leq (n-4)} 26 x_{i,j} + \text{ edge terms} \geq 7(n-4)^2.\]
    Next, since the edge terms all have coefficients less than $26$, we have
    \[26*(\text{total number of tokens})= 26\sum_{1\leq i,j\leq n} x_{i,j}\geq \sum_{5\leq i,j\leq (n-4)} 26 x_{i,j} + \text{ edge terms}\geq 7(n-4)^2.\]
    In other words, the total number of tokens is at least $\frac{7}{26}(n-4)^2$
\end{proof}
\subsection*{Computation} We have computed all penults up to $6\times 6$ using Maple.  
As expected, $L(4)=6$. We then saw that $L(5)=10$ and $L(6)=16$, and that the possible numbers of tokens in each case are intervals.
There are a total of $59$ nonisometric $4\times 4$ penults, and $3629$ nonisometric $5\times 5$ penults.
Curiously, there is a \emph{unique} $5\times 5$ penult with $10$ tokens (see Figure \ref{fig:min 5x5 penult}), and this is the only size board and number of tokens for which we have observed this uniqueness!
See the last author's website, \url{http://www.thotsaporn.com}, for all relevant Maple code. 
\begin{figure}[h]
    \centering
    \begin{tikzpicture}[scale=0.4] 
    \draw[step=1cm,black,very thin] (0,0) grid (5,5);
    \fill[blue] (0,0) rectangle (1,2);
    \fill[blue] (2,0) rectangle (3,1);
    \fill[blue] (4,0) rectangle (5,1);
    \fill[blue] (3,2) rectangle (4,5);
    \fill[blue] (0,4) rectangle (1,5);
    \fill[blue] (1,3) rectangle (2,5);
    \end{tikzpicture}
    \caption{The unique minimal $5\times 5$ penult}
    \label{fig:min 5x5 penult}
\end{figure}
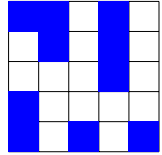
\section{Winning strategies for \textsc{Impartial Tic} and \textsc{Impartial Tak}}\label{section - winning strategies}
As usual, let $n$ denote the side length of the game board. For both games under consideration, the winning strategy depends on the parity of $n$.
\begin{prop} 
In \textsc{impartial tic}, if $n$ is even, second player wins via a mirroring strategy across the origin.
If $n$ is odd, first player wins by first taking the center square, and then mirroring across the origin.  
In both cases, the winning player may need to deviate from this strategy for their winning move.
\end{prop}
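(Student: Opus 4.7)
The plan is to exhibit the mirror-based strategies explicitly and verify them by induction, using the invariant that after each move of the proposed winner the position is centrally symmetric and every row and column contains at most $n-2$ tokens. Let $m(i,j)=(n+1-i,n+1-j)$ denote the half-turn through the board's center; $m$ fixes only the center, which exists precisely when $n$ is odd, and pairs all other squares.

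For even $n$, I would have second player play as follows: after first player plays at $(i,j)$, if some row or column now contains exactly $n-1$ tokens, complete that line and win; otherwise reply at $m(i,j)$. The invariant holds on the empty board. For the inductive step, from a symmetric position in which every line has at most $n-2$ tokens, first player's move raises at most one row and one column by a single token each, so first player cannot complete any line on this turn. If some line does reach $n-1$ tokens, second player completes it and wins. Otherwise row $i$ and column $j$ both still hold at most $n-2$ tokens, so they held at most $n-3$ before first player's move, and by symmetry so did row $n+1-i$ and column $n+1-j$. The mirror square differs from $(i,j)$ (since $n$ is even) and is empty by symmetry, so the mirror move is legal; after it the rows $i, n+1-i$ and columns $j, n+1-j$ each hold at most $n-2$ tokens while the position is once again symmetric. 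The invariant bounds the total token count by $n(n-2)$, while each round adds two tokens, so the deviation must trigger within finitely many rounds and second player wins.

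For odd $n$, first player opens at the center. The resulting position is symmetric with every line holding exactly one token, which is at most $n-2$ for $n\ge 3$ (and $n=1$ is trivial). After this opening move the roles are swapped and first player adopts the same mirror-or-complete strategy; the mirror is always legal because the only self-paired square is the already-occupied center. The main subtlety -- the place where one might worry the strategy backfires -- is ruling out that a mirror move itself raises row $n+1-i$ to $n-1$ tokens and thereby hands the opponent a winning reply. This is exactly what the invariant prevents: such a jump would force row $i$ to have had $n-1$ tokens before the opponent's move, contradicting the hypothesis, and in any case that scenario would already have triggered the deviation to win.
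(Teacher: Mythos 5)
Your even-$n$ argument is correct and is essentially the paper's strategy (symmetry plus the observation that the mirroring player never gives the opponent a line with one empty square), made more explicit with the invariant that every line holds at most $n-2$ tokens after the mirrorer's move. The problem is the odd case, and specifically the one configuration where the half-turn does not separate lines: when the opponent plays at $(i,j)$ with $i=\tfrac{n+1}{2}$ and $j\neq\tfrac{n+1}{2}$ (or symmetrically in the middle column), the mirror reply $(\,\tfrac{n+1}{2},\,n+1-j\,)$ lands in the \emph{same} row, so the middle row gains two tokens in a single round. Your inductive step then only yields ``at most $n-1$'' for that row, not ``at most $n-2$,'' and your dismissal of this scenario is incorrect: if the mirror move raised the middle row to $n-1$ tokens, that would only mean the middle row had $n-3$ tokens before the opponent's move, which does not contradict your invariant ($n-3\le n-2$) and does not trigger the deviation. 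So as written, you have not ruled out the possibility that your own mirror move hands the opponent a middle row (or column) with exactly one empty square, which is exactly the way the strategy could backfire.

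The missing ingredient is a parity argument, which is precisely how the paper treats the odd case: after the opening move in the center, the middle row and middle column each have $n-1$ empty squares, an even number, and each subsequent round removes either $0$ or $2$ empty squares from each of them (the opponent's move and your mirror lie in the middle row together or not at all, and likewise for the middle column). Hence after every one of your moves the middle row and column have an even number of empty squares, equivalently an odd token count. Since $n-3$ is even for odd $n$, the dangerous precondition above can never occur, and in particular the opponent is never presented with a middle line having a single empty square. With that observation your invariant (suitably restated for the two middle lines) goes through and the odd case closes; without it, the proof has a genuine gap.
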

\begin{proof}
    If $n$ is even, the second player's moves will always result in a board position that is symmetric about the origin.  Furthermore, the second player always moves in a different row and column than the preceding move. Hence, it will be the first player who first creates a row or column with $n-1$ tokens.  Then second player then can take the win.

    If $n$ is odd, the proof is similar, switching the roles of the first and second player, except that we must consider the middle row and middle column separately.  Each of these will have an even number of spaces left open after each move by the first player.  Thus the second player cannot win by completing one of these.
\end{proof}
 It is easy to check that mirroring across a center line will also work for the even case of \textsc{impartial tic}, though not for the odd case.
\begin{prop}\label{prop:strategy for even tak}
For \textsc{impartial tak}, if $n$ is even, second player wins via a mirroring strategy across a center line.   They may need to deviate from this strategy for the winning move. 
\end{prop}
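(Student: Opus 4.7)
The strategy I propose for the second player is: on each turn, if there is a cell whose placement creates a road, play it (winning immediately); otherwise, reflect first player's most recent move across the vertical center line of the board, sending $(r,c)$ to $(r,n+1-c)$. Since $n$ is even, this reflection is a fixed-point-free involution on the cells and sends any top-to-bottom road to a top-to-bottom road. A straightforward induction on moves verifies both legality and a symmetry invariant: the empty board is symmetric; whenever the position just before first player's move is symmetric, the mirror of the chosen empty cell is also empty, so second player's mirroring move is legal and restores the symmetry.

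To show first player never completes a road, suppose for contradiction that first player creates a road $P$ on move $f_{k+1}$ from the symmetric board $B_{2k}$; I treat the top-to-bottom case (the left-to-right case is handled by an analogous argument). Let $f_k = (r_k, c_k)$ be first player's previous move, $s_k = f_k^{*}$ second player's mirror response, and $B_{2k-1} = B_{2k}\setminus\{s_k\}$. Pass to a simple sub-path, still called $P$. If $P$ avoids $s_k$, then $P \subseteq B_{2k-1}\cup\{f_{k+1}\}$, exhibiting $f_{k+1}$ as a winning move second player should have played instead of mirroring — a contradiction. If $P$ uses $s_k$ but its vertical reflection $P^{*}\subseteq B_{2k-1}\cup\{s_k, f_{k+1}^{*}\}$ avoids $s_k$, then $P^{*}\subseteq B_{2k-1}\cup\{f_{k+1}^{*}\}$, and $f_{k+1}^{*}$ is the missing winning move.

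The remaining case — when both $P$ and $P^{*}$ use $s_k$, equivalently $P$ visits row $r_k$ at both $f_k$ and $s_k$ — is where I expect the main effort. I plan to decompose $P$ at these two cells as $P = P_1 \cup P_m \cup P_2$, with $P_1$ running from row $1$ to $f_k$, $P_m$ from $f_k$ to $s_k$, and $P_2$ from $s_k$ to row $n$ (WLOG $f_k$ is visited first), and then form the splice $P_1 \cup P_2^{*}$. Because vertical reflection preserves rows, $P_2^{*}$ still terminates in row $n$, so the splice is a connected set joining row $1$ to row $n$ through $f_k$, i.e. a top-to-bottom road. A sub-case on which piece contains $f_{k+1}$ finishes the argument: if $f_{k+1}\in P_1$ the splice lies in $B_{2k-1}\cup\{f_{k+1}\}$, exhibiting $f_{k+1}$ as a winning move for second player; if $f_{k+1}\in P_2$ the splice (via $P_2^{*}$) lies in $B_{2k-1}\cup\{f_{k+1}^{*}\}$, exhibiting $f_{k+1}^{*}$; and if $f_{k+1}\in P_m$ the splice lies entirely in $B_{2k-1}$, contradicting that first player had not already completed a road on move $k$. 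Each case contradicts second player's choice to mirror. Since the game must terminate (any filled row is a road) and first player can never be the one to complete a road, the winning move must be second player's — precisely the deviation from mirroring promised in the statement.
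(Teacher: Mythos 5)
Your treatment of the top-to-bottom case is correct, and it takes a genuinely different route from the paper's. The paper first converts the winning path into one confined to a single half-board (replacing each excursion across the center line by its mirror image, using that the two center columns are reflections of each other), and then observes that the second player could have won a move earlier at $(a,b)$ or its mirror; your argument instead splits the simple path at the mirror pair $f_k,s_k$ and splices $P_1$ with $P_2^{*}$, reading off the missed winning move ($f_{k+1}$, $f_{k+1}^{*}$, or an already-existing road) from which piece contains $f_{k+1}$. Your version is, if anything, more explicit than the paper's, whose claim ``there was also a winning path that did not cross the center line'' is asserted without the excursion-replacement detail, and whose ``whichever is on the side with more tokens'' is vaguer than your case split. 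Minor bookkeeping you leave implicit (that $f_{k+1}^{*}$ is empty at the second player's turn, which follows from $f_{k+1}\notin B_{2k}$ and the symmetry of $B_{2k}$, and the degenerate possibility $f_{k+1}\notin P$) is easy to supply.

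The one place you should not say ``analogous'' is the left-to-right case. Your splice relies on the vertical reflection preserving the two target edges (rows $1$ and $n$); for a left-to-right road the reflection \emph{swaps} the left and right edges, so in your remaining case $P_2^{*}$ runs from $f_k$ back to the same edge that $P_1$ starts from, and $P_1\cup P_2^{*}$ is not a road. (Your first two sub-cases, where $P$ avoids $s_k$ or $P^{*}$ avoids $s_k$, do transfer verbatim.) Fortunately this case is easier and needs no splice at all: since $n$ is even, $f_{k+1}$ lies strictly in one half, so the portion of $P$ running from the opposite edge up to its first crossing of the center line avoids $f_{k+1}$ and hence lies in $B_{2k}$; that portion together with its mirror image is a left-to-right road already contained in the symmetric board $B_{2k}$, contradicting that play continued. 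This is exactly how the paper disposes of the left-to-right case, and with that substitution your proof is complete.
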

\begin{proof}
    Suppose $n$ is even, the second player is mirroring across a vertical center line, and that the first player has just won with a move on the left half.  If the winning path connects the left and right edges, then the portion of the winning path on the right half, together with its mirror image, must already have been a winning path, a contradiction.  
    
    If the winning path connects the top and bottom edges, then first note that there was also a winning path that did not cross the center line.  If we call the first player's winning move $(a,b)$, then the second player could have won on their previous move by instead moving at either $(a,b)$ or $(n-a,b)$, whichever is on the side with more tokens.  
\end{proof}
Mirroring across the origin will not be a winning strategy for the even case of \textsc{impartial tak}.  In fact, neither of these strategies, nor mirroring across a diagonal will work for the general odd case.  See Figure \ref{fig:wrong winning strategies} for counterexamples.  While we suspect first player wins for all odd $n$, we can only prove it for $n=5$.
\begin{figure}[h]
    \centering
    \begin{tikzpicture}[scale=0.4] 
    \draw[step=1cm,black,very thin] (0,0) grid (4,4);
    \fill[blue] (1,0) rectangle (2,2);
    \fill[blue] (2,2) rectangle (3,4);
    \end{tikzpicture}
    \qquad
    \begin{tikzpicture}[scale=0.4] 
    \draw[step=1cm,black,very thin] (0,0) grid (5,5);
    \fill[blue] (0,0) rectangle (1,2);
    \fill[blue] (2,2) rectangle (3,3);
    \fill[blue] (0,3) rectangle (1,5);
    \draw[red, thick, dotted] (0,2.5) -- (5,2.5);
    \end{tikzpicture}
    \qquad
    \begin{tikzpicture}[scale=0.4] 
    \draw[step=1cm,black,very thin] (0,0) grid (7,7);
    \fill[blue] (3,3) rectangle (4,4);
    \fill[blue] (1,1) rectangle (6,2);
    \fill[blue] (2,0) rectangle (3,1);
    \fill[blue] (1,2) rectangle (2,3);
    \fill[blue] (5,2) rectangle (6,3);
    \fill[blue] (5,4) rectangle (6,5);
    \fill[blue] (1,4) rectangle (2,5);
    \fill[blue] (1,5) rectangle (6,6);
    \fill[blue] (4,6) rectangle (5,7); 
    \node[red] at (2.5,0.5) {X};
    \node[red] at (4.5,6.5) {X};
    \end{tikzpicture}
    \qquad
    \begin{tikzpicture}[scale=0.4] 
    \draw[step=1cm,black,very thin] (0,0) grid (7,7);
    \fill[blue] (3,3) rectangle (4,4);
    \fill[blue] (0,0) rectangle (1,2);
    \fill[blue] (1,1) rectangle (5,2);
    \fill[blue] (5,2) rectangle (6,7);
    \fill[blue] (6,6) rectangle (7,7);
    \draw[red, thick, dotted] (0,7) -- (7,0);
    \end{tikzpicture}
    \caption{In (a), second player loses when mirroring across the origin on an even board.  In (b), (c), (d), first player loses on an odd board when mirroring across a center line, the origin, and a diagonal, respectively.  Where noted, the tokens marked with an X were the last two moves.}
    \label{fig:wrong winning strategies}
\end{figure}
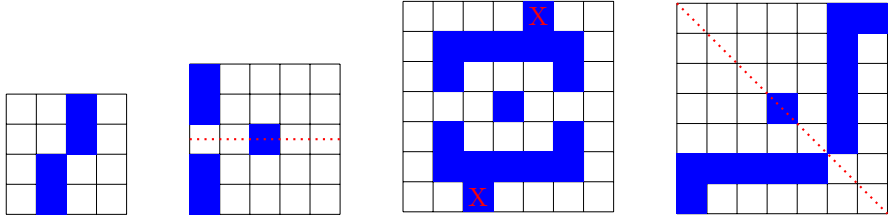
\begin{prop}\label{prop:strategy for odd tak}
For \textsc{impartial tak}, if $n=5$ is odd,  first player wins by first taking the center square, and then mirroring across the origin.   They may need to deviate from this strategy for the winning move.  
\end{prop}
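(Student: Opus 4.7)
The plan is to mirror the argument of Proposition~\ref{prop:strategy for even tak}, adapting it from line-symmetry to point-symmetry about the center. First I would observe that after the first player's opening center move at $(3,3)$ and each subsequent mirroring move, the position after each of the first player's moves is invariant under the $180^\circ$ rotation $(i,j)\mapsto(6-i,6-j)$, with the center always occupied; in particular every non-central token has a distinct mirror partner also present, and the set of empty squares is likewise symmetric so the mirror move is always available.

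I would then argue by contradiction: suppose the second player wins on their $k$-th move at some square $(a,b)$, necessarily with $k\geq 2$ since a single token cannot complete a $5\times 5$ edge-to-edge path. Let $H$ denote the $180^\circ$-symmetric position just before this move. Then $H$ contains no edge-to-edge path (else the game had already ended), yet $H\cup\{(a,b)\}$ contains a winning path $P$. By the symmetry of $H$, the mirror configuration $H\cup\{(a',b')\}$, where $(a',b')=(6-a,6-b)$, contains the mirror path $P'$, which is edge-to-edge of the same orientation; WLOG I take $P$ (and hence $P'$) to be horizontal.

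The key step is to produce an edge-to-edge path lying entirely in $H$, contradicting that $H$ had no such path. I would attempt to splice $P$ and $P'$: if they share a vertex $v\notin\{(a,b),(a',b')\}$ --- for example $v=(3,3)$ whenever the center lies on $P$, since the center is self-mirror --- then concatenating the portion of $P$ from its starting edge to $v$ with the portion of $P'$ from $v$ to the opposite edge gives such a path in $H$.

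The main obstacle is that naive splicing can fail: $P$ and $P'$ may be vertex-disjoint (e.g.\ horizontal paths on rows $2$ and $4$), or they may share vertices only in incompatible positions along the two paths, so that any concatenation still visits $(a,b)$ or $(a',b')$. To close these cases I would exploit the restricted $n=5$ geometry: up to the symmetries of the square commuting with the central mirroring, only a small number of locations for $(a,b)$ need be considered (corner, edge-adjacent-to-corner, middle of an edge, near-center, on the central row or column), and the constraints that $H$ is $180^\circ$-symmetric, contains $(3,3)$, has exactly $2k-1$ tokens, and is winning-path-free severely restrict its shape. In each case one either builds an explicit splice, or uses the tight structural constraints to rule out the existence of such an $H$ in the first place. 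As a backstop, since the Maple enumeration from Section~\ref{section - impartial tak} classifies all $5\times 5$ penults, the finitely many reachable symmetric positions can be certified directly by computer --- which is likely why the authors can verify the strategy only for this small value of $n$.
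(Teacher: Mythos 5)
Your setup (symmetry of the position $H$ after each of the first player's moves, contradiction via a winning path in $H$) matches the paper, and your center-splice is essentially the paper's first case: if the winning path $P$ passes through the center, splice half of $P$ with the mirror of that half to get a symmetric winning path avoiding $(a,b)$, already present in $H$. But the two cases you flag as "the main obstacle" are exactly where the content of the proof lies, and your proposed resolutions do not work. First, when $P$ (more precisely, the component of $(a,b)$) is disjoint from its mirror image, you cannot "rule out the existence of such an $H$": such symmetric, path-free $H$ do exist (e.g.\ four tokens in row $2$, their mirrors in row $4$, plus the center). What fails is reachability under the strategy, and the paper handles this with a "could have won earlier" argument exactly parallel to the even case: on the first player's previous (mirroring) move they could instead have played $(a,b)$ or $(6-a,6-b)$, whichever creates the larger connected component, and won --- this is precisely why the proposition allows deviation for the winning move. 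Your proposal never invokes the deviation clause, so this case is a genuine hole.

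Second, for the case where $P$ avoids the center but its component does meet its mirror image, the paper does not do a location-by-location case analysis of $(a,b)$; it uses a single geometric observation special to $5\times 5$: adding the token $(6-a,6-b)$ as well makes the symmetric component touch all four edges and encircle the center, and on a $5\times 5$ board deleting the two cells $(a,b)$ and $(6-a,6-b)$ from such an encircling component still leaves a pair of opposite edges connected, i.e.\ $H$ already contained a winning path --- contradiction. Your splicing attempt can indeed fail here (shared vertices in incompatible positions), and the unspecified "tight structural constraints" do not substitute for this argument. Finally, the computer backstop is not a fallback available from the paper: the Maple computation enumerates penults, not positions reachable under this strategy, and verifying strategy correctness by search would be a different (and unwritten) computation. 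As it stands, the proposal proves only the through-the-center case and leaves the other two cases open.
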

\begin{proof}
Suppose $n=5$, the first player is following this strategy, and the second player has just won with the move $(a,b)$.  

If there is a winning path which contains the center square, then there is also one symmetric about the origin.  This one does not contain $(a,b)$, a contradiction.

If there is a winning path disconnected from its mirror image, then the first player could have won on their previous move by instead moving at either $(a,b)$ or $(n-a,n-b)$, whichever makes a connected component with more tokens.

The remaining case is that there is a winning path, disconnected from the middle, and connected to its mirror image.  On a $5\times 5$ board, this means that if we also put a token on $(n-a,n-b)$, then all four edges will be connected by a single connected component which encircles the middle.  Thus deleting the tokens at $(a,b)$ and $(n-a,n-b)$ will leave opposite edges connected, a contradiction. 
\end{proof}

\begin{conj}
    If $n>5$ is odd, then the first player has a winning strategy for \textsc{impartial tak}.
\end{conj}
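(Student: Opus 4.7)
The plan is to extend Proposition \ref{prop:strategy for odd tak} by combining the initial center-play with a more sophisticated reactive strategy than plain point-reflection through the origin. First player takes the center, as in the $n = 5$ case; this sets up symmetry-based arguments for everything that follows. In the $n = 5$ proof, after center play, any would-be winning path by the second player either (i) passes through the center, (ii) is disconnected from its own mirror, or (iii) encircles the center while being connected to its mirror. The first two cases yield either a prior winning move for first player (by playing $(a,b)$ or its mirror themselves) or a pre-existing road (contradiction), and on a $5 \times 5$ board case (iii) forces a similar contradiction because removing the two mirror tokens still leaves opposite edges connected. I would aim to recover a version of each case for larger odd $n$.

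The main obstacle is case (iii): the counterexample in Figure \ref{fig:wrong winning strategies}(c) shows that pure point-reflection fails for $n = 7$ because the second player can construct a winning path that encircles the center and glues to its own mirror without the union having constituted a prior road. My plan is to augment mirroring with a \emph{choke-point preemption}: identify a (small) set of cells such that any path encircling the center must pass through one of them, and allow the first player to spend a move on such a cell rather than strictly mirroring whenever the opponent threatens to encircle. Making this precise likely requires a careful combinatorial definition of ``encircles,'' perhaps via a Jordan-curve-style argument applied to the dual graph, together with a proof that the preemption move is always available (i.e., some relevant choke-point is empty when needed) and that deploying it does not itself hand the game to the opponent.

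I expect the crux to be showing that the choke-point preemption is consistent with maintaining \emph{approximate} point-symmetry on the rest of the board, so that the case analysis from $n = 5$ still handles cases (i) and (ii) even after the first player has occasionally deviated from the pure mirror. A complementary step worth taking in parallel is to verify the conjecture computationally for $n = 7, 9, 11$ by extending the Sprague--Grundy analysis already performed for $n \leq 6$; this would both confirm the conjecture in the first nontrivial cases and, more importantly, expose structural patterns in the first player's winning opening moves that could suggest the correct uniform strategy.
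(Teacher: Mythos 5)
This statement is a conjecture in the paper: the authors explicitly prove the first-player win only for $n=5$ (Proposition \ref{prop:strategy for odd tak}) and state that they suspect, but cannot prove, the result for larger odd $n$. So there is no paper proof to compare against, and what you have written is a research plan, not a proof. Its two load-bearing steps are exactly the ones left open. First, the ``choke-point preemption'' idea is not established and, as stated, looks false: any path encircling the center must cross, say, the half of the middle row to the right of the center, but that is a set of about $(n-1)/2$ cells, not a small set, and the opponent can threaten encirclement along many disjoint routes simultaneously; a single preemptive move cannot block them all, and you give no argument that one relevant cell is both empty and sufficient at the moment a threat appears. Second, the claim that the $n=5$ case analysis for cases (i) and (ii) survives under ``approximate'' point-symmetry is unsupported: the whole force of the mirroring argument is the exact invariant that after each of first player's moves the position is symmetric about the center, so that second player's winning cell has an available mirror image. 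Once first player deviates even once, second player can occupy the mirror of the deviated cell, the invariant is irrecoverable, and the contradiction arguments in cases (i) and (ii) (``the mirror of the winning path was already a road'' / ``first player could have won at $(a,b)$ or $(n-a,n-b)$'') no longer go through. Note also that Figure \ref{fig:wrong winning strategies} shows that every natural symmetry strategy (center line, origin, diagonal) fails for odd $n\geq 7$, which is precisely why the authors leave this as a conjecture; your proposed computational check for $n=7,9,11$ would give evidence but of course not a proof for all odd $n$.
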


\section{Further adventures}\label{section - further adventures}
\subsection*{Penult Intervals. \textsc{D\&B}}
Is there an natural example of a impartial, normal-play, positional game in which the possible number of tokens in a penult on a fixed board size does \emph{not} form an interval?  At first glance, \textsc{D\&B} also seems to follow the same pattern as \textsc{impartial tic} and \textsc{impartial tak}.  Representative penults for the smallest cases are shown in Figure \ref{fig:d&b 3x3}.
\def\offsetA{0.15}
\begin{figure}[h]
    \centering
    \begin{tikzpicture}[scale=0.4] 
    \foreach \x in {0,1,2}
    \foreach \y in {0,1,2}
    {\node[black] at (\x,\y) {$\bullet$};}
    \draw[red, thick] (0+\offsetA,1) -- (1-\offsetA,1);
    \draw[red, thick] (1+\offsetA,1) -- (2-\offsetA,1);
    \draw[red, thick] (1,0+\offsetA) -- (1,1-\offsetA);
    \draw[red, thick] (1,1+\offsetA) -- (1,2-\offsetA);
    \end{tikzpicture}
    \qquad
    \begin{tikzpicture}[scale=0.4] 
    \foreach \x in {0,1,2}
    \foreach \y in {0,1,2}
    {\node[black] at (\x,\y) {$\bullet$};}
    \draw[red, thick] (0+\offsetA,1) -- (1-\offsetA,1);
    \draw[red, thick] (1+\offsetA,1) -- (2-\offsetA,1);
    \draw[red, thick] (1,0+\offsetA) -- (1,1-\offsetA);
    \draw[red, thick] (2-\offsetA,2) -- (1+\offsetA,2);
    \draw[red, thick] (0+\offsetA,2) -- (1-\offsetA,2);
    \end{tikzpicture}
    \qquad
    \begin{tikzpicture}[scale=0.4] 
    \foreach \x in {0,1,2}
    \foreach \y in {0,1,2}
    {\node[black] at (\x,\y) {$\bullet$};}
    \draw[red, thick] (0+\offsetA,1) -- (1-\offsetA,1);
    \draw[red, thick] (1+\offsetA,1) -- (2-\offsetA,1);
    \draw[red, thick] (0+\offsetA,0) -- (1-\offsetA,0);
    \draw[red, thick] (2-\offsetA,0) -- (1+\offsetA,0);
    \draw[red, thick] (2-\offsetA,2) -- (1+\offsetA,2);
    \draw[red, thick] (0+\offsetA,2) -- (1-\offsetA,2);
    \end{tikzpicture}
    \qquad
    \begin{tikzpicture}[scale=0.4] 
    \foreach \x in {0,1,2}
    \foreach \y in {0,1,2}
    {\node[black] at (\x,\y) {$\bullet$};}
    \draw[red, thick] (0+\offsetA,1) -- (1-\offsetA,1);
    \draw[red, thick] (2,0+\offsetA) -- (2,1-\offsetA);
    \draw[red, thick] (2,2-\offsetA) -- (2,1+\offsetA);
    \draw[red, thick] (0+\offsetA,0) -- (1-\offsetA,0);
    \draw[red, thick] (2-\offsetA,0) -- (1+\offsetA,0);
    \draw[red, thick] (2-\offsetA,2) -- (1+\offsetA,2);
    \draw[red, thick] (0+\offsetA,2) -- (1-\offsetA,2);
    \end{tikzpicture}
    \qquad
    \begin{tikzpicture}[scale=0.4] 
    \foreach \x in {0,1,2}
    \foreach \y in {0,1,2}
    {\node[black] at (\x,\y) {$\bullet$};}
    \draw[red, thick] (0,0+\offsetA) -- (0,1-\offsetA);
    \draw[red, thick] (0,2-\offsetA) -- (0,1+\offsetA);
    \draw[red, thick] (2,0+\offsetA) -- (2,1-\offsetA);
    \draw[red, thick] (2,2-\offsetA) -- (2,1+\offsetA);
    \draw[red, thick] (0+\offsetA,0) -- (1-\offsetA,0);
    \draw[red, thick] (2-\offsetA,0) -- (1+\offsetA,0);
    \draw[red, thick] (2-\offsetA,2) -- (1+\offsetA,2);
    \draw[red, thick] (0+\offsetA,2) -- (1-\offsetA,2);
    \end{tikzpicture}
    \newline    
    \newline    
     \begin{tikzpicture}[scale=0.4] 
    \foreach \x in {0,1,2,3}
    \foreach \y in {0,1,2,3}
    {\node[black] at (\x,\y) {$\bullet$};}
    \draw[red, thick] (0+\offsetA,1) -- (1-\offsetA,1);
    \draw[red, thick] (1,0+\offsetA) -- (1,1-\offsetA);
    \draw[red, thick] (2,0+\offsetA) -- (2,1-\offsetA);
    \draw[red, thick] (2+\offsetA,1) -- (3-\offsetA,1);
    \draw[red, thick] (0+\offsetA,2) -- (1-\offsetA,2);
    \draw[red, thick] (1,2+\offsetA) -- (1,3-\offsetA);
    \draw[red, thick] (2,2+\offsetA) -- (2,3-\offsetA);
    \draw[red, thick] (2+\offsetA,2) -- (3-\offsetA,2);
    \end{tikzpicture}
    \quad    
     \begin{tikzpicture}[scale=0.4] 
    \foreach \x in {0,1,2,3}
    \foreach \y in {0,1,2,3}
    {\node[black] at (\x,\y) {$\bullet$};}
    \draw[red, thick] (0+\offsetA,1) -- (1-\offsetA,1);
    \draw[red, thick] (1,0+\offsetA) -- (1,1-\offsetA);
    \draw[red, thick] (2,0+\offsetA) -- (2,1-\offsetA);
    \draw[red, thick] (2+\offsetA,1) -- (3-\offsetA,1);
    \draw[red, thick] (0+\offsetA,2) -- (1-\offsetA,2);
    \draw[red, thick] (1,2+\offsetA) -- (1,3-\offsetA);
    \draw[red, thick] (2,2+\offsetA) -- (2,3-\offsetA);
    \draw[red, thick] (3,2+\offsetA) -- (3,3-\offsetA);
    \draw[red, thick] (3,1+\offsetA) -- (3,2-\offsetA);
    \end{tikzpicture}
    \quad    
     \begin{tikzpicture}[scale=0.4] 
    \foreach \x in {0,1,2,3}
    \foreach \y in {0,1,2,3}
    {\node[black] at (\x,\y) {$\bullet$};}
    \draw[red, thick] (0+\offsetA,1) -- (1-\offsetA,1);
    \draw[red, thick] (1,0+\offsetA) -- (1,1-\offsetA);
    \draw[red, thick] (2,0+\offsetA) -- (2,1-\offsetA);
    \draw[red, thick] (3,0+\offsetA) -- (3,1-\offsetA);
    \draw[red, thick] (2,1+\offsetA) -- (2,2-\offsetA);
    \draw[red, thick] (0+\offsetA,2) -- (1-\offsetA,2);
    \draw[red, thick] (1,2+\offsetA) -- (1,3-\offsetA);
    \draw[red, thick] (2,2+\offsetA) -- (2,3-\offsetA);
    \draw[red, thick] (3,2+\offsetA) -- (3,3-\offsetA);
    \draw[red, thick] (3,1+\offsetA) -- (3,2-\offsetA);
    \end{tikzpicture}
    \quad    
     \begin{tikzpicture}[scale=0.4] 
    \foreach \x in {0,1,2,3}
    \foreach \y in {0,1,2,3}
    {\node[black] at (\x,\y) {$\bullet$};}
    \draw[red, thick] (1,1+\offsetA) -- (1,2-\offsetA);
    \draw[red, thick] (0,0+\offsetA) -- (0,1-\offsetA);
    \draw[red, thick] (1,0+\offsetA) -- (1,1-\offsetA);
    \draw[red, thick] (2,0+\offsetA) -- (2,1-\offsetA);
    \draw[red, thick] (3,0+\offsetA) -- (3,1-\offsetA);
    \draw[red, thick] (2,1+\offsetA) -- (2,2-\offsetA);
    \draw[red, thick] (0+\offsetA,2) -- (1-\offsetA,2);
    \draw[red, thick] (1,2+\offsetA) -- (1,3-\offsetA);
    \draw[red, thick] (2,2+\offsetA) -- (2,3-\offsetA);
    \draw[red, thick] (3,2+\offsetA) -- (3,3-\offsetA);
    \draw[red, thick] (3,1+\offsetA) -- (3,2-\offsetA);
    \end{tikzpicture}
    \quad    
     \begin{tikzpicture}[scale=0.4] 
    \foreach \x in {0,1,2,3}
    \foreach \y in {0,1,2,3}
    {\node[black] at (\x,\y) {$\bullet$};}
    \draw[red, thick] (1,1+\offsetA) -- (1,2-\offsetA);
    \draw[red, thick] (0,0+\offsetA) -- (0,1-\offsetA);
    \draw[red, thick] (1,0+\offsetA) -- (1,1-\offsetA);
    \draw[red, thick] (2,0+\offsetA) -- (2,1-\offsetA);
    \draw[red, thick] (3,0+\offsetA) -- (3,1-\offsetA);
    \draw[red, thick] (2,1+\offsetA) -- (2,2-\offsetA);
    \draw[red, thick] (0,2+\offsetA) -- (0,3-\offsetA);
    \draw[red, thick] (0,1+\offsetA) -- (0,2-\offsetA);
    \draw[red, thick] (1,2+\offsetA) -- (1,3-\offsetA);
    \draw[red, thick] (2,2+\offsetA) -- (2,3-\offsetA);
    \draw[red, thick] (3,2+\offsetA) -- (3,3-\offsetA);
    \draw[red, thick] (3,1+\offsetA) -- (3,2-\offsetA);
    \end{tikzpicture}
    \quad    
     \begin{tikzpicture}[scale=0.4] 
    \foreach \x in {0,1,2,3}
    \foreach \y in {0,1,2,3}
    {\node[black] at (\x,\y) {$\bullet$};}
    \draw[red, thick] (1,1+\offsetA) -- (1,2-\offsetA);
    \draw[red, thick] (0,0+\offsetA) -- (0,1-\offsetA);
    \draw[red, thick] (0+\offsetA,0) -- (1-\offsetA,0);
    \draw[red, thick] (1+\offsetA,0) -- (2-\offsetA,0);
    \draw[red, thick] (2+\offsetA,0) -- (3-\offsetA,0);
    \draw[red, thick] (3,0+\offsetA) -- (3,1-\offsetA);
    \draw[red, thick] (2,1+\offsetA) -- (2,2-\offsetA);
    \draw[red, thick] (0,2+\offsetA) -- (0,3-\offsetA);
    \draw[red, thick] (0,1+\offsetA) -- (0,2-\offsetA);
    \draw[red, thick] (1,2+\offsetA) -- (1,3-\offsetA);
    \draw[red, thick] (2,2+\offsetA) -- (2,3-\offsetA);
    \draw[red, thick] (3,2+\offsetA) -- (3,3-\offsetA);
    \draw[red, thick] (3,1+\offsetA) -- (3,2-\offsetA);
    \end{tikzpicture}
    \quad    
     \begin{tikzpicture}[scale=0.4] 
    \foreach \x in {0,1,2,3}
    \foreach \y in {0,1,2,3}
    {\node[black] at (\x,\y) {$\bullet$};}
    \draw[red, thick] (1,1+\offsetA) -- (1,2-\offsetA);
    \draw[red, thick] (0,0+\offsetA) -- (0,1-\offsetA);
    \draw[red, thick] (0+\offsetA,0) -- (1-\offsetA,0);
    \draw[red, thick] (1+\offsetA,0) -- (2-\offsetA,0);
    \draw[red, thick] (2+\offsetA,0) -- (3-\offsetA,0);
    \draw[red, thick] (3,0+\offsetA) -- (3,1-\offsetA);
    \draw[red, thick] (2,1+\offsetA) -- (2,2-\offsetA);
    \draw[red, thick] (0,2+\offsetA) -- (0,3-\offsetA);
    \draw[red, thick] (0,1+\offsetA) -- (0,2-\offsetA);
    \draw[red, thick] (0+\offsetA,3) -- (1-\offsetA,3);
    \draw[red, thick] (1+\offsetA,3) -- (2-\offsetA,3);
    \draw[red, thick] (2+\offsetA,3) -- (3-\offsetA,3);
    \draw[red, thick] (3,2+\offsetA) -- (3,3-\offsetA);
    \draw[red, thick] (3,1+\offsetA) -- (3,2-\offsetA);
    \end{tikzpicture}
    \quad   
    \caption{A penult for \textsc{D\&B} on a $3\times 3$ board can have $4$, $5$, $6$, $7$, or $8$ tokens. A penult for \textsc{D\&B} on a $4\times 4$ board can have from $8$ to $14$ tokens.}
    \label{fig:d&b 3x3}
\end{figure}
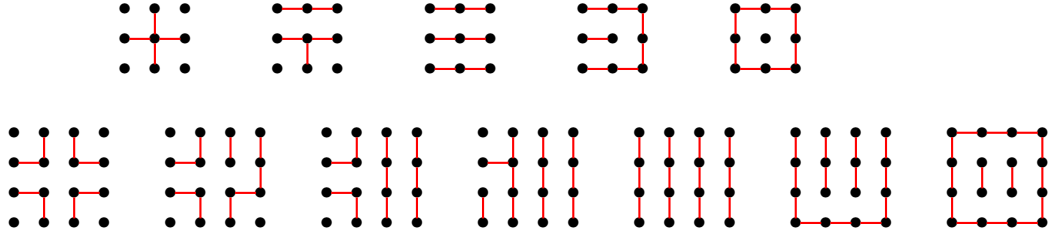
\subsection*{Mate-in-$n$}  Positions of impartial, normal-play combinatorial games are first partitioned into $\mathcal{W}$- and $\mathcal{L}$-positions.  The $\mathcal{W}$-positions are further partitioned by \textsc{nim}-values.  It is natural to partition $\mathcal{L}$-positions as follows:
\begin{itemize}
    \item If the position is a terminal position, call it \emph{mate-in-$0$}
    \item Let $\alpha>0$ be an ordinal.  Suppose 
    \begin{itemize}
        \item each option itself has an option that is mate-in-$\gamma$ for some $\gamma<\alpha$; and
        \item the position is not mate-in-$\beta$ for any $\beta<\alpha$,
    \end{itemize} 
    then call it \emph{mate-in-$\alpha$}
\end{itemize}
Then penults are precisely those positions which are mate-in-$1$.  For finite $k$, a position is mate-in-$k$ if $k$ is the largest number such that there is an option whose only winning response is a position that is mate-in-$(k-1)$.  In other words, it is the largest $k$ for which the losing player can force the game to need $2k$ moves to terminate. Examples:
\begin{itemize}
    \item In $\textsc{subtract}\{1,2,3\}$, the position $4n$ is mate-in-$n$.
    \item In \textsc{wythoff's game}, the position $(\lfloor k\phi\rfloor, \lfloor k\phi\rfloor+k)$ is mate-in-$k$.  In this case, the only winning response to the move $(\lfloor (k-1)\phi\rfloor, \lfloor k\phi\rfloor+k)$ is $(\lfloor (k-1)\phi\rfloor, \lfloor (k-1)\phi\rfloor+k-1)$.
    \item A losing position of \textsc{nim} with $2n$ total tokens is mate-in-$n$.  If the first player takes one token from the pile with the smallest non-zero binary digit, the second player will be forced to take a single token from another pile with that same digit.
\end{itemize}

Notice that this concept does not coincide with either \emph{birthday}, as all of these game positions are of \textsc{nim}-value $0$, nor with \emph{formal birthday}, as evidenced by the example $\{\{\emptyset,\{\emptyset\}\}\}$.  This game position has formal birthday $3$ and is mate-in-$1$, that is, a penult.
As it is such a natural notion in \textsc{chess}, it is curious that mate-in-$n$ appears little, if at all, in the CGT literature.

\subsection*{Partizan games. Non-positional games}  Definition \ref{definition of penult} is not restricted to  positional games.  It can also be extended to partizan games as follows.
\begin{defi}
    Let $G=\{L\mid R\}$ be a game position.  
    \begin{itemize}
        \item $G$ is a \textit{left-penult} if $R\neq\emptyset$, and for all $\{A\mid B\}\in R$, $\emptyset\in A$.
        \item $G$ is an \textit{right-penult} if $L\neq\emptyset$, and for all $\{A\mid B\}\in L$, $\emptyset\in B$.
        \item $G$ is a \textit{penult} if it is both a left-penult and a right-penult.
    \end{itemize}
\end{defi}
The penults of \cite{Quarto}, for example, might be worth a look.

\subsection*{Mis\`{e}re play}   Is there a reasonable notion of a penult for mis\`{e}re-play games?  For example, \cite{Notakto} is a positional game canonically played in mis\`{e}re.


\begin{thebibliography}{CCFM}

\bibitem[LIP]{LIP} M. Albert, R. Nowakowski, and D. Wolfe, 
{\em{Lessons In Play, An Introduction to Combinatorial 
Game Theory}}, A K Peters, Ltd., 2007.


\bibitem[Notakto]{Notakto} \url{https://en.wikipedia.org/wiki/Notakto}

\bibitem[Quarto]{Quarto}
\url{https://en.wikipedia.org/wiki/Quarto_(board_game)}

\bibitem[Tak]{Tak} 
\url{https://en.wikipedia.org/wiki/Tak_(game)}



\end{thebibliography}
\end{document}